 \newtheorem{theorem}{Theorem}[section]
 \newtheorem{corollary}[theorem]{Corollary}
 \newtheorem{conjecture}[theorem]{Conjecture}
  \newtheorem{lemma}[theorem]{Lemma}
\newtheorem{example}[theorem]{Example}
\theoremstyle{remark}
\newtheorem{fact*}{Fact}
\DeclareMathOperator{\IM}{Im}
\newcommand{\hilbert}{\mathcal{H}}
\newcommand{\A}{\mathcal{A}}
\newcommand{\C}{\mathbb{C}}
\newcommand{\R}{\mathbb{R}}
\newcommand{\cc}[1]{\overline{#1}}
\newcommand{\abs}[1]{\left\vert#1\right\vert}
\newcommand{\ip}[2]{\left\langle #1, #2 \right\rangle}
\newcommand{\inv}{^{-1}}
\newcommand{\til}{\raise.17ex\hbox{$\scriptstyle\mathtt{\sim}$}}
\newcommand\la{\lambda}
\newcommand\beq{\begin{equation}}
\newcommand\eeq{\end{equation}}
\newcommand\black{\color{black}}
\newcommand\bbm{\begin{bmatrix}}
\newcommand\ebm{\end{bmatrix}}
\newcommand{\bpm}{\left( \begin{smallmatrix}}
\newcommand{\epm}{\end{smallmatrix} \right)}
\numberwithin{equation}{section}
\newlength{\Mheight}
\newlength{\cwidth}
\newcommand{\dfn}[1]{{\bf #1}\index{#1}}
\newcommand{\cA}{\mathcal{A}}
\newcommand{\comb}{\triangleright}
\title{Cauchy transforms of colored graphs in two variables}
\author[L. Adlin]{Lily Adlin}
\author[G. Thai]{Giovani Thai}
\author[S. Tiscareno]{Samuel Tiscareno}
\author[R. Tully-Doyle]{
Ryan Tully-Doyle$^\dagger$
}
\address{Mathematics Department \\
Cal Poly, SLO\\
1 Grand Ave \\
San Luis Obispo, CA 93407}
\email[R. Tully-Doyle]{rtullydo@calpoly.edu}
\thanks{$\dagger$ Partially supported by National Science Foundation DMS Analysis Grant 2055098}
\subjclass[2020]{Primary: 32A08, Secondary: 05C50, 32A40, 47A48 }
\keywords{rational inner functions, colored graphs, Nevanlinna representations, walk generating functions}
\begin{document}

\begin{abstract}
By designating vertices with variables, a simple undirected graph can be augmented to have associated representing rational function in two variables taking the complex bi-upper halfplane to itself. We give relations between representing functions of certain products of such graphs by way of Schur complements. We also study the connection between the structure of the graph and the regularity of the representing function at a boundary singularity.
\end{abstract}

\maketitle

\section{Introduction}
Let $\Pi = \{z \in \C: \IM z > 0\}$ denote the complex upper halfplane. A Pick function is an analytic function $f$ with the property that $\IM f(z) \geq 0$ whenever $z \in \Pi$. (We note that these functions are referred to in parallel literature as Herglotz, Nevanlinna, or R- functions.) An inner Pick function has the additional property that $f(z) \in \R$ whenever $z \in \R$ (that is, inner Pick functions map the boundary into the boundary. With a mild assumption on the growth of $f$ along the imaginary axis, such functions have what is known as a \dfn{Nevanlinna representation} - that is, such $f$ are the Cauchy transforms of positive Borel measures on $\R$.

In \cite{aty12}, Agler, Tully-Doyle, and Young gave a generalization to several variables. Here, we give the two variable case. In two variables, a Pick function is analytic function $f: \Pi^2 \to \cc{\Pi}$. An inner Pick function additionally has $f(z) \in \R$ whenever $(z,w) \in \R^2$.

\begin{theorem}
Suppose that $f$ is a Pick function  in two variables. If $\displaystyle \lim_{s\to\infty} s \abs{f(is, is)} < \infty$, then there exist a separable Hilbert space $\hilbert$, a (densely-defined) self-adjoint operator $A$ on $\hilbert$, a positive contraction $Y$ on $\hilbert$, and a vector $\alpha \in \hilbert$ so that 
\[
f(z, w) = \ip{(A - z_Y)\inv\alpha}{\alpha}_\hilbert,
\]
where $z_Y = zY + w(I - Y)$.
\end{theorem}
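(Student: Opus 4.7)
My plan is to establish the theorem by first proving a two-variable Agler-type decomposition of the imaginary-part kernel of $f$, then building the realization $(\hilbert, A, Y, \alpha)$ from that decomposition via a lurking-isometry and self-adjoint-extension argument.

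The first step is to show that there exist positive semidefinite kernels $K_1, K_2$ on $\Pi^2 \times \Pi^2$ satisfying
\[
f(z,w) - \overline{f(\zeta, \eta)} = (z - \overline{\zeta})\, K_1((z,w),(\zeta, \eta)) + (w - \overline{\eta})\, K_2((z,w),(\zeta, \eta)).
\]
This is the two-variable analog of the one-variable identity $f(z) - \overline{f(\zeta)} = (z - \bar{\zeta}) K(z, \zeta)$ underlying the classical Nevanlinna representation. The natural route is either a direct cone-separation argument on the cone of two-variable positive kernels, or a Cayley transform converting $f$ into a Schur-class function on the bidisk $\mathbb{D}^2$ followed by an appeal to Agler's bidisk realization theorem. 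The finite-growth hypothesis along the imaginary diagonal is what ensures no affine term in $(z,w)$ appears in the decomposition, so that one obtains a bounded representation.

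Next I would convert the decomposition into vector form. By Kolmogorov factorization, write $K_i((z,w),(\zeta, \eta)) = \langle u_i(z, w), u_i(\zeta, \eta)\rangle_{\mathcal{H}_i}$ for separable Hilbert spaces $\mathcal{H}_i$ and holomorphic $u_i : \Pi^2 \to \mathcal{H}_i$. Set $\hilbert := \mathcal{H}_1 \oplus \mathcal{H}_2$, let $Y$ be the orthogonal projection onto the first summand, and put $u(z,w) := u_1(z,w) \oplus u_2(z,w) \in \hilbert$. Then $z_Y = zY + w(I-Y)$ acts as the diagonal operator $\mathrm{diag}(z, w)$ on $\hilbert$, and the decomposition rewrites as
\[
f(z, w) - \overline{f(\zeta, \eta)} = \langle (z_Y - \overline{\zeta_Y})\, u(z, w),\, u(\zeta, \eta)\rangle_\hilbert,
\]
where $\overline{\zeta_Y} := \overline{\zeta}Y + \overline{\eta}(I - Y)$. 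The growth hypothesis then produces a vector $\alpha \in \hilbert$ as the norm limit $\alpha = -\lim_{s \to \infty} is\, u(is, is)$, with $\|\alpha\|^2 = \lim_{s\to\infty} s |f(is, is)|$.

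With these data I would run a lurking-isometry argument: define a densely defined linear operator $A_0$ on the linear span of $\{u(z, w) : (z, w) \in \Pi^2\}$ by $A_0\, u(z, w) := z_Y u(z, w) + \alpha$; the kernel identity above is exactly what is required for $A_0$ to be well defined and symmetric. Passing to a self-adjoint extension $A \supset A_0$ (enlarging $\hilbert$ if $A_0$ has non-balanced deficiency indices) yields $u(z, w) = (A - z_Y)^{-1}\alpha$, whence $f(z, w) = \langle u(z, w), \alpha\rangle = \langle (A - z_Y)^{-1}\alpha, \alpha\rangle$. The principal obstacle is the first step: in one variable the Agler-type decomposition is essentially automatic from positivity of $\IM f$, whereas in two variables the splitting into two positive pieces requires a substantive cone-separation (or bidisk Schur-class) argument and is the real crux of the theorem. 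A secondary subtlety is the self-adjoint extension of the generically unbounded $A_0$, which is handled by a standard conjugation-symmetry argument on its graph, combined with enlarging $\hilbert$ if necessary.
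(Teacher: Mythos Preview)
The paper does not give its own proof of this theorem: it is stated in the introduction as a background result quoted from \cite{aty12}, with no argument supplied. So there is no ``paper's own proof'' to compare against.

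That said, your sketch is broadly the approach taken in the cited source. The Agler-type kernel decomposition
\[
f(z,w) - \overline{f(\zeta,\eta)} = (z-\bar\zeta)K_1 + (w-\bar\eta)K_2
\]
is indeed the core ingredient, typically obtained by Cayley-transforming to the bidisk and invoking the Schur--Agler realization; the Kolmogorov factorization, the identification of $Y$ as the projection onto the first summand, and the lurking-isometry construction of a symmetric $A_0$ followed by self-adjoint extension are exactly how \cite{aty12} proceeds. A couple of points in your outline would need tightening in an actual write-up: the existence of the norm limit $\alpha = -\lim_{s\to\infty} is\,u(is,is)$ does not follow immediately from the growth bound on $s|f(is,is)|$ alone and requires a short argument using the kernel identity at diagonal points; and the well-definedness of $A_0$ (that $u(z,w)=u(z',w')$ forces $z_Y u(z,w)=z'_Y u(z',w')$) must be extracted carefully from the decomposition, since $z_Y$ depends on the point. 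But these are refinements rather than gaps, and your identification of the Agler decomposition as the substantive step is correct.
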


In the converse direction, Pascoe \cite{pascoe1}, Bickel, Pascoe, and Sola \cite{bps2}, and Bickel and Hong \cite{bickelhong} gave specific classes of $A$, $Y$ and $\alpha$ guaranteed to produce a rational inner Pick function with a boundary singularity.

\begin{theorem}[\cite{pascoe1, bps2, bickelhong}]\label{blackmagic}
Let $A_G$ be the adjacency matrix of a simple undirected graph $G$ on $n$ vertices. Let $Y$ be the diagonal $n\times n$ matrix 
\beq\label{Ydef}
Y = \bbm I_{n-1} & 0 \\ 0 & t \ebm
\eeq
where $t \in [0,1]$. Let $e_1$ be the first standard basis vector in $\C^n$. Then
\[
f_G(z, w) = \ip{(A_G - z_Y)\inv e_1}{e_1}
\]
is a rational inner Pick function with a (regular) boundary singularity at $(0, 0)$.
\end{theorem}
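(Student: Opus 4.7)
My plan is to verify in turn that $f_G$ is Pick, inner, and rational, and then to analyze the asserted regular boundary singularity at $(\infty, 0)$.

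For the Pick property, note $A_G$ is real symmetric hence self-adjoint, and $Y = \diag(I_{n-1}, t)$ with $t \in [0,1]$ is a positive contraction. For $(z,w) \in \Pi^2$, $\IM z_Y = (\IM z) Y + (\IM w)(I-Y)$ is strictly positive definite because for any unit vector $v$ one has $\ip{\IM z_Y \cdot v}{v} \geq \min(\IM z, \IM w) > 0$, using $Y + (I - Y) = I$. Thus $A_G - z_Y$ has strictly negative imaginary part, is invertible, and the standard identity $\IM T^{-1} = -T^{-1}(\IM T) T^{-*}$ yields $\IM(A_G - z_Y)^{-1} \succ 0$. Evaluating at $e_1$ gives $\IM f_G(z,w) \geq 0$. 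For real $(z,w)$ where the matrix is invertible, $A_G - z_Y$ is real symmetric, hence its inverse and in particular its $(1,1)$-entry are real, proving innerness; Cramer's rule writes $f_G$ as a ratio of polynomials in $(z,w)$, proving rationality.

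For the boundary analysis at $(\infty, 0)$, I would use the block decomposition $A_G = \bbm A' & b \\ b^T & 0 \ebm$ separating the last vertex, noting $z_Y = \diag(zI_{n-1}, tz + (1-t)w)$. The Schur complement formula gives
\[
f_G(z,w) = e_1^T \Bigl(A' - zI + \frac{bb^T}{tz + (1-t)w}\Bigr)^{-1} e_1,
\]
and the corresponding denominator, after clearing $tz + (1-t)w$, is a polynomial in $(z,w)$ vanishing on a curve through $(\infty, 0)$, confirming this is a genuine singularity.

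The main obstacle will be verifying that the singularity is \emph{regular} in the Julia--Carath\'eodory sense used by the cited papers: finite non-tangential limits of $f_G$ and its relevant directional derivatives at $(\infty, 0)$. I would establish these using the Schur complement formula above together with the Neumann expansion $e_1^T (A' - zI)^{-1} e_1 = -1/z + O(1/z^2)$, valid because $A_G$ has zero diagonal. Along any non-tangential approach, $\arg z$ and $\arg w$ lie in a compact subset of $(0, \pi)$, so $tz + (1-t)w$ remains in a fixed sector of $\Pi$ with modulus bounded below by a constant multiple of $t|z| + (1-t)|w|$. This uniform control, combined with the leading-order behavior of the Neumann series, should give the required finiteness of the boundary values and derivatives.
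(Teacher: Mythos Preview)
The paper does not prove this theorem: it is stated in the introduction with the attribution \texttt{[\textbackslash cite\{pascoe1, bps2, bickelhong\}]} and no proof is given. The only related argument in the paper is Lemma~\ref{nevproperties} (itself quoted from \cite{aty12}), which records that $(A - z_Y)^{-1}$ is well defined on $\Pi^2$ with $\IM (A - z_Y)^{-1} \geq 0$; the paper then simply asserts the extension Theorem~\ref{blackmagicbetter} as an ``easy extension'' without details. So there is nothing here to compare your proposal against.

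On its own merits: your treatment of the Pick, inner, and rational properties is correct and is exactly the content of Lemma~\ref{nevproperties} plus the obvious real-symmetry and Cramer observations. Your Schur-complement reduction separating off the last vertex is also correct. The weak point is the boundary analysis. First, the assertion that the cleared denominator ``vanishes on a curve through $(\infty,0)$'' is not yet a proof that $(\infty,0)$ is a genuine singularity of $f_G$; you need to rule out cancellation with the numerator, and the argument as written does not distinguish the cases $t\in(0,1]$ (where $tz+(1-t)w\to\infty$ along the approach) from $t=0$. Second, ``regular'' here refers to a specific higher-order Julia--Carath\'eodory notion developed in the cited papers, and your final paragraph is an outline rather than an argument: the sector control on $tz+(1-t)w$ and the Neumann expansion are the right ingredients, but the actual verification that the required non-tangential limits and derivative bounds exist and are finite is where the work lies, and it is not carried out. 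If you intend to supply a self-contained proof, that is the part that needs to be written in full, using the precise definition of regularity from \cite{bps2}.
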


Given such a graph $G$, we call the resulting Pick function $f_G$ the \emph{representing function} of $G$. These sorts of Cauchy transforms of graph related objects have been studied in the context of free products of graphs in, e.g, \cite{quenell, gutkin, carter}. Other work related to asymptotics and cumulants can be found, for example, in \cite{agax, ahl22}.

The graph theoretic viewpoint taken by Bickel and Hong in \cite{hong, bickelhong} provides a new vantage on the structure of rational inner functions. Following them, our work is concerned with two basic questions. First, can we recover information about the boundary regularity of $f_G$ from the structure of $G$? This question is deeply related to ongoing study of rational inner functions on the bidisk initiated by Bickel, Pascoe, and Sola \cite{bps1, bps2}, in particular the notion of \emph{contact order} at a boundary singularity. Second, what is the connection between certain products or decomposition of graphs and the relationship between their representing functions? Here, we follow work that recently originated in the thesis of Hong \cite{hong} and was followed up by Bickel and Hong \cite{bickelhong}.  Given the adjacency matrix $A$ of a graph $G$, we can view the expression $A - z_Y = A - zY - w(1-Y)$ with $Y$ as in \eqref{Ydef} as the adjacency matrix of a chromatic graph with self-loops at each vertex weighted by $z$ and $w$. Viewing these self-loops as a sort of vertex coloring, when $t = 0$ in $Y$, such a graph will have exactly one vertex colored with a $w$ and all others colored with $z$.

Our main results are as follows. Given two rooted simple undirected graphs $G, G'$, the \emph{star product} (to use the terminology of \cite{ahl22}) $G \star G'$ is the graph obtained by joining $G$ and $G'$ at the root. For compatible graphs, we show in Corollary \ref{staridentity} by elementary linear algebraic arguments an additive relationship between the representing functions. Likewise, the \emph{comb product} $G \comb G'$ attaches a copy of $G'$ to every vertex of $G$. For compatible graphs, we show a composition relationship between the representing functions of the graph in Corollary \ref{combidentity}. (Such compositions are typically quite difficult to recover for even rational two-variable functions.) Finally, in Theorem \ref{contactorder}, we connect the graph theory to the boundary behavior of associated functions; we prove that given a colored adjacency matrix with a single vertex colored $w$ and all others $z$,  the path length connecting the $z$-colored root to the single $w$-colored vertex recovers the boundary regularity (in terms of contact order) of $f_G$ at infinity, partially resolving a conjecture of Bickel and Hong \cite{bickelhong} and giving a method for constructing a rich class of examples with prescribed boundary behavior.

\section{Preliminaries}

\subsection{Nevanlinna representations}

The following basic lemma is essentially contained in \cite{aty12} in the discussion of type I operator resolvents.

\begin{lemma}[\cite{aty12} Proposition 2.2 ]\label{nevproperties}
    Let $A = A^*$; $Y = \begin{bmatrix} 
t_{1} & & \\
& \ddots & \\
& & t_{n} \\
\end{bmatrix}$ where $0 \leq t_{i} \leq 1$; and $\alpha \in \mathbb{C}^n$. Then $(A - z_Y)\inv$ is well-defined on $\Pi^2$. Moreover, for all $(z,w) \in \Pi^2$,
\[
\IM (A - z_Y)\inv \geq 0.
\]

\end{lemma}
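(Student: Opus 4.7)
The plan is to reduce the lemma to the one-variable fact that $A = A^*$ implies $A - z$ is invertible with $\IM(A - z)^{-1} \ge 0$ on $\Pi$, by computing $\IM(A - z_Y)$ directly and exploiting the convex-combination structure $Y + (I - Y) = I$ with $0 \le Y \le I$.

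First I would compute the imaginary part of $A - z_Y$. Since $A = A^*$ and $Y$ is a real diagonal matrix, $\IM A = 0$ and
\[
\IM(z_Y) \;=\; \IM\bigl(zY + w(I-Y)\bigr) \;=\; (\IM z)\,Y + (\IM w)\,(I - Y).
\]
For $(z,w) \in \Pi^2$ both $\IM z$ and $\IM w$ are strictly positive, and since $Y, I - Y \ge 0$ with $Y + (I - Y) = I$, one obtains
\[
(\IM z)\,Y + (\IM w)\,(I - Y) \;\ge\; \min(\IM z, \IM w)\, I \;>\; 0,
\]
so $\IM(A - z_Y)$ is strictly negative definite. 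This immediately forces $A - z_Y$ to be invertible: if $(A - z_Y)\xi = 0$ for some $\xi \ne 0$, then $\langle \IM(A - z_Y)\xi,\, \xi\rangle = 0$, contradicting the strict bound above.

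For the positivity of $\IM(A - z_Y)^{-1}$, I would set $M = A - z_Y$ and apply the standard identity
\[
M^{-1} - (M^*)^{-1} \;=\; M^{-1}(M^* - M)(M^*)^{-1},
\]
which rearranges to
\[
\IM M^{-1} \;=\; M^{-1}\bigl(-\IM M\bigr)(M^{-1})^*.
\]
Since $-\IM M = (\IM z)Y + (\IM w)(I - Y) > 0$, the right-hand side is a conjugation of a positive operator by $M^{-1}$, hence positive (semi)definite, as desired.

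I do not anticipate a serious obstacle here; the argument is essentially a single computation. The only points requiring care are tracking signs when passing from $\IM M$ to $\IM M^{-1}$, and noticing that the positivity of both $Y$ and $I - Y$ together with their summing to $I$ is exactly what lets the two-variable positivity of $\IM z$ and $\IM w$ be converted into uniform positivity of $\IM(z_Y)$ on $\Pi^2$.
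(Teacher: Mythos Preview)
Your argument is correct and is the standard one: compute $\IM(A - z_Y) = -\bigl((\IM z)Y + (\IM w)(I-Y)\bigr)$, observe this is strictly negative definite on $\Pi^2$ because $0 \le Y \le I$, deduce invertibility, and then use the resolvent-type identity $\IM M^{-1} = -M^{-1}(\IM M)(M^{-1})^*$ to transfer positivity. The sign bookkeeping and the use of $Y + (I-Y) = I$ are handled carefully.

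As for comparison: the paper does not supply its own proof of this lemma at all. It is stated with a citation to \cite{aty12}, Proposition~2.2, and the surrounding text says only that the result ``is essentially contained in \cite{aty12} in the discussion of type I operator resolvents.'' Your write-up is exactly the kind of short computation that underlies that cited result, so there is nothing to contrast---you have simply filled in what the paper leaves to the reference.
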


Lemma \ref{nevproperties} gives a useful version of Theorem \ref{blackmagic}.

\begin{theorem}\label{blackmagicbetter}
Let $A_G$ be the adjacency matrix of a simple undirected graph $G$ on $n$ vertices. Let $Y$ be the diagonal $n\times n$ matrix 
\beq\label{Ydef2}
Y = \bbm I_{k} & 0 \\ 0 & 0 \ebm.
\eeq
Let $e_j$ be the $j$-th basis vector in $\C^n$. Then
\[
f(z, w) = \ip{(A_G - z_Y)\inv e_j}{e_j}
\]
is a rational inner Pick function with a (regular) boundary singularity at $(0,0)$.
\end{theorem}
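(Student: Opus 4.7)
The plan is to verify the three defining properties of a rational inner Pick function with a regular boundary singularity at $\infty$, leaning on Lemma \ref{nevproperties} to do most of the analytic work. The argument closely follows the proof of Theorem \ref{blackmagic}, but since Lemma \ref{nevproperties} is stated for any diagonal $Y$ with entries in $[0,1]$ and any vector $\alpha$, only bookkeeping is required to allow the more flexible block form \eqref{Ydef2} and the general basis vector $e_j$.

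First, I would establish the Pick property. Since $Y$ in \eqref{Ydef2} has all diagonal entries in $\{0,1\} \subset [0,1]$, Lemma \ref{nevproperties} applies and gives $\IM (A_G - z_Y)\inv \geq 0$ on $\Pi^2$, so $f(z,w) = \ip{(A_G - z_Y)\inv e_j}{e_j}$ is analytic on $\Pi^2$ with non-negative imaginary part, hence a Pick function in two variables. Rationality is immediate because $A_G - z_Y$ has entries that are affine in $(z,w)$, so any entry of the inverse is a rational function of $(z,w)$.

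Next, I would show $f$ is inner. For $(z,w) \in \R^2$ with $A_G - z_Y$ invertible, $z_Y = zY + w(I-Y)$ is a real diagonal matrix and $A_G$ is real symmetric, so $A_G - z_Y$ is real symmetric. Its inverse is then real symmetric, and in particular every diagonal entry is real, giving $f(z,w) \in \R$ on $\R^2 \cap \dom f$.

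Finally, for the regular boundary singularity at $\infty$, the key observation is that the specialization $z = w = is$ collapses $z_Y$ to $isI_n$ regardless of how the zeros and ones of $Y$ are arranged. Thus $f(is,is) = \ip{(A_G - isI)\inv e_j}{e_j}$. Since $A_G$ is self-adjoint with real spectrum, the spectral theorem gives $\norm{(A_G - isI)\inv} \leq 1/s$, so $s\abs{f(is,is)} \leq 1$ for all $s > 0$. This is precisely the growth hypothesis of Theorem~1.1, which is the relevant characterization of a regular singularity at $\infty$ for the two-variable Nevanlinna representation. The main subtlety in carrying this out is unpacking the correct definition of \emph{regular boundary singularity at $\infty$} and checking that the uniform bound $s\abs{f(is,is)} \leq 1$ is really the right thing to verify; once that is in hand, each of the three pieces above is a one-line application of linear algebra.
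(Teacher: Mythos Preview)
Your proposal is correct and matches the paper's approach: the paper does not give a separate proof block for this theorem, merely remarking that Lemma~\ref{nevproperties} yields an easy extension of Theorem~\ref{blackmagic}, and your write-up is exactly that extension spelled out in detail. Your verification of the Pick property via Lemma~\ref{nevproperties}, the inner and rational properties via elementary linear algebra, and the growth bound $s\abs{f(is,is)}\le 1$ via the spectral theorem are all precisely the bookkeeping the paper has in mind.
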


\subsection{Graph constructions}

 From a simple undirected graph $G$, construct the \emph{$(z,w)$-colored graph} (or simply \emph{colored graph}) by adding to $G$ a self loop with the weight equal to the negative of the label. The adjacency matrix for this modified graph we denote $$\mathcal{A}_G = A_G - zY - w(I-Y),$$ where $$
Y = \bbm I_{n-k} & 0 \\ 0 & 0 \ebm 
$$  with the entries of 1 on the diagonal corresponding to the vertices tagged with $z$. See Figure \ref{fig:analog} representing the colored graph $\cA_G$ with adjacency matrix 
\[
\cA_G = \bbm 0 & 1 \\ 1 & 0 \ebm - z\bbm 1 & 0 \\ 0 & 0 \ebm - w \bbm 0 & 0 \\ 0 & 1 \ebm = \bbm -z & 1 \\ 1 & -w \ebm.
\]

\begin{figure}[!htbp]
    \centering
    \begin{tikzpicture} [scale=.6,auto=center, every node/.style={circle,fill=blue!20}]
    \node[label=below:$z$] (1) at (0, 0) {};
    \node[label=below:$w$] (2) at (4, 0) {};

    \node[fill=white] (eq) at (6, 0) {=};

    \node (3) at (9, 0) {};
    \node (4) at (13, 0) {};
    
    \draw (1) -- (2);
    \draw (3) -- (4);
    \path (3) edge [loop left] node[fill=white] {$-z$} (3);
    \path (4) edge [loop right] node[fill=white] {$-w$} (4);
    \end{tikzpicture}
    \caption{Notation for coloring vertices, describes adding self-loops with weights $-z$ or $-w$.}
    \label{fig:analog}
\end{figure}
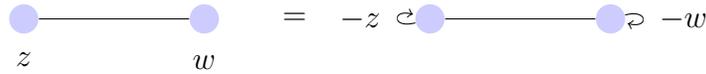

\subsection{Nevanlinna representation of graphs}

We begin with a graph theoretic interpretation of the Nevanlinna representation in Theorem \ref{blackmagic}. The first observation is that viewing the kernel of the Nevanlinna representation in Theorem \ref{blackmagic} as the inverse of the adjacency matrix of the colored graph $\cA_G$, we have
\[
(A - z_Y)\inv = \cA_G\inv.
\]

\begin{lemma}
Let $\cA_G$ be the adjacency matrix of a $(z,w)$-colored graph $G$ on $n$ vertices, and let $e_k$ be the $k$th standard basis vector in $\C^k$. Then
\[
f_G(z,w) = \ip{\cA\inv_G e_k}{e_k}
\]
is a rational inner Pick function.
\end{lemma}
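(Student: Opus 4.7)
The plan is to show that this lemma is essentially a translation of Theorem \ref{blackmagicbetter} into the colored-graph language introduced just above. By construction, $\cA_G = A_G - zY - w(I-Y) = A_G - z_Y$, and Lemma \ref{nevproperties} guarantees $(A_G - z_Y)\inv$ is well-defined on $\Pi^2$, so $\cA_G$ is invertible there and $f_G$ is a well-defined rational function. The quadratic form $\ip{\cA_G\inv e_k}{e_k}$ is therefore identical to $\ip{(A_G - z_Y)\inv e_k}{e_k}$.

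The only mismatch with Theorem \ref{blackmagicbetter} is cosmetic: in a general $(z,w)$-coloring, the $z$-colored vertices may occupy an arbitrary index subset, so the diagonal 0-1 matrix $Y$ need not literally be in the block form $\bbm I_k & 0 \\ 0 & 0 \ebm$ required by \eqref{Ydef2}. To fix this, I would choose a permutation matrix $P$ that reorders the vertex labels of $G$ so that the $z$-colored vertices are listed first. Conjugating by $P$ sends $A_G \mapsto P^T A_G P$ (which is the adjacency matrix of $G$ with the vertices relabeled), $Y \mapsto P^T Y P$ (which now has the required block form), and $e_k \mapsto P^T e_k = e_{\sigma(k)}$ for some index $\sigma(k)$. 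Since
\[
\ip{\cA_G\inv e_k}{e_k} = \ip{(P^T \cA_G P)\inv P^T e_k}{P^T e_k},
\]
the representing function is unchanged.

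After this relabeling, the hypotheses of Theorem \ref{blackmagicbetter} are met exactly: $P^T A_G P$ is the adjacency matrix of a simple undirected graph, $P^T Y P$ has the form \eqref{Ydef2}, and $e_{\sigma(k)}$ is a standard basis vector. Theorem \ref{blackmagicbetter} then directly yields that $f_G$ is a rational inner Pick function with a (regular) boundary singularity at $\infty$.

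There is really no genuine obstacle here; the content of the lemma is bookkeeping that packages Theorem \ref{blackmagicbetter} as a statement about $\cA_G$ rather than about $A_G - z_Y$, so that the subsequent sections can freely treat the Pick function $f_G$ as a functional of the colored adjacency matrix. The only point needing care is ensuring that the permutation argument respects the fact that rationality, innerness, and the location of the boundary singularity are properties of $f_G$ itself and so are invariant under vertex relabeling of $G$.
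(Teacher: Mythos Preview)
Your proposal is correct and matches the paper's approach exactly: the paper's proof is the single sentence ``This is essentially a restatement of Theorem \ref{blackmagicbetter} under permutation,'' and you have simply unpacked that sentence in detail, spelling out the permutation-conjugation argument that brings $Y$ into the block form \eqref{Ydef2}.
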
 
\begin{proof}
This is essentially a restatement of Theorem \ref{blackmagicbetter} under permutation.
\end{proof}

That is, given $\cA_G$, we can compute a representing function of $G$ with respect to vertex $k$ by using $e_k$ in the inner product. This function is the representing function of $G$ at vertex $k$, denoted $f_G^k(z,w)$. We also introduce the notation
\[
\ip{ \cA_G\inv e_k}{e_k} = (\cA_G\inv)_{(k,k)} = f_G^k(z,w),
\]
so that $f_G^1 = f_G$.

Representing functions for graphs are invariant under relabeling of vertices \cite[Lemma 2.3]{bickelhong}. We take a graph theoretic approach.

\begin{lemma}\label{relabel}
    Let $G$ be a graph with colored adjacency matrix $\cA_G$ and $H$ be a graph with colored adjacency matrix $\mathcal{A}_H$. If $H = \phi(G)$ with graph isomorphism $\phi$ then 
    $$f^k_G(z,w) = f^{\phi(k)}_H(z,w).$$
\end{lemma}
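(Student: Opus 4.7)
The plan is to realize the isomorphism $\phi$ as conjugation by a permutation matrix and then push that conjugation through both the matrix inverse and the inner product.

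First I would introduce the permutation matrix $P$ associated to $\phi$, defined by $Pe_j = e_{\phi(j)}$ for every $j$. Interpreting ``graph isomorphism'' in the colored sense appropriate to this setting (i.e.\ $\phi$ preserves the edge set of the underlying simple graph and also sends $z$-colored vertices to $z$-colored vertices), I would record the two companion identities $A_H = P A_G P^T$ and $Y_H = P Y_G P^T$. Combining them linearly in $z$ and $w$ yields the single identity
\[
\cA_H \;=\; A_H - zY_H - w(I - Y_H) \;=\; P\cA_G P^T.
\]

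Next, since $P$ is orthogonal, inversion commutes with this conjugation: $\cA_H\inv = P \cA_G\inv P^T$. Substituting into the definition of $f_H^{\phi(k)}$ and using $P^T e_{\phi(k)} = e_k$ gives
\[
f^{\phi(k)}_H(z,w) = \ip{\cA_H\inv e_{\phi(k)}}{e_{\phi(k)}} = \ip{\cA_G\inv P^T e_{\phi(k)}}{P^T e_{\phi(k)}} = \ip{\cA_G\inv e_k}{e_k} = f^k_G(z,w),
\]
which is the desired equality.

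The only real subtlety, and the step worth stating carefully, is the coloring hypothesis: a bare isomorphism of the underlying simple graph only guarantees $A_H = P A_G P^T$, and one needs $\phi$ to respect the $z/w$ labeling in order to obtain $Y_H = P Y_G P^T$ and hence the full matrix identity $\cA_H = P \cA_G P^T$. Once that is in place, everything else is a routine manipulation, consistent with the original statement in \cite[Lemma 2.3]{bickelhong}.
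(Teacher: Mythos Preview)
Your proposal is correct and follows essentially the same route as the paper: realize $\phi$ as a permutation matrix, conjugate $\cA_G$ to $\cA_H$, and push the conjugation through the inverse and the inner product (the paper writes this in $(i,j)$-entry notation rather than inner products, but the computation is identical). Your explicit remark that $\phi$ must preserve the $z/w$ coloring in order to conclude $\cA_H = P\cA_G P^T$ is a point the paper leaves implicit, so your version is, if anything, slightly more careful.
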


\begin{proof}
    Note that $f_G(z,w) = (\cA_G^{-1})_k$. Since $\phi$ is an isomorphism, there exists a permutation matrix $U_\phi$ so that $U_{\phi}\inv A_G U_\phi = A_H$. 
 
    Then 
    \begin{align*}
    f^{\phi(k)}_H(z,w) &= (\cA_H\inv)_{(\phi(k),\phi(k))} \\
    &= (U_{\phi}^{-1} \cA_G U_{\phi})^{-1}_{(\phi(k),\phi(k))} \\
    &= (U_{\phi}^{-1}\cA_G^{-1}U_{\phi})_{(\phi(k),\phi(k))}\\
    &= (\cA_G\inv)_{(k,k)} \\
    &= f_G^k(z,w).
    \end{align*}
\end{proof}

Note that this implies that we can put any colored adjacency matrix into the form required to apply Theorem \ref{blackmagicbetter} (essentially collecting the $z$'s together). Essentially, we can relabel the vertices at will. Most often, we compute with respect to vertex $1$ and suppress the additional superscript.

Further, adding new components to a graph does not change the representing function (essentially, walks emanating from a vertex in one component cannot travel into other components). A related result is in \cite{bickelhong} as Proposition 2.4 with a different argument.

\begin{lemma} \label{connectedcomponent}
    Let $G$ be a graph with colored adjacency matrix $\cA_G$. Let $K$ be a graph formed from $G$ and a component $H$ disconnected from $G$. Then $f_G(z,w) = f_K(z,w)$. 
\end{lemma}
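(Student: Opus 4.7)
The plan is to exploit the block-diagonal structure that arises from disconnectedness. First, by Lemma \ref{relabel}, I can relabel the vertices of $K$ so that the vertices of $G$ occupy the first $n$ indices (with the distinguished root vertex at position $1$), and the vertices of $H$ occupy the remaining indices, without changing the representing function at the root. Since no edges run between $G$ and $H$, the colored adjacency matrix takes the block form
\[
\cA_K = \begin{bmatrix} \cA_G & 0 \\ 0 & \cA_H \end{bmatrix}.
\]
The self-loop colorings only contribute to the diagonal entries, so they respect the block decomposition and do not introduce off-diagonal coupling between $G$ and $H$.

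Next, I would observe that the inverse of a block-diagonal matrix is block-diagonal with the inverses of the blocks. By Lemma \ref{nevproperties}, $\cA_G$ and $\cA_H$ are invertible on $\Pi^2$, so $\cA_K$ is also invertible there and
\[
\cA_K\inv = \begin{bmatrix} \cA_G\inv & 0 \\ 0 & \cA_H\inv \end{bmatrix}.
\]
Since the root vertex corresponds to $e_1 \in \C^{n+|H|}$ which is supported entirely in the $G$-block, we have
\[
f_K(z,w) = \ip{\cA_K\inv e_1}{e_1} = \ip{\cA_G\inv e_1}{e_1} = f_G(z,w),
\]
as desired.

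The argument is essentially routine linear algebra; the only point requiring any care is the bookkeeping to justify the relabeling (so that the block structure is valid and the root still corresponds to $e_1$) and the observation that the $(z,w)$-coloring, being purely diagonal, is transparent to the block decomposition. Both are immediate from the preceding results, so I do not anticipate any real obstacle.
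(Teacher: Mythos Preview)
Your proposal is correct and follows essentially the same approach as the paper: exploit the block-diagonal form of $\cA_K$ induced by disconnectedness, invert blockwise, and read off the $(1,1)$ entry from the $\cA_G$-block. If anything, you are slightly more explicit in citing Lemma~\ref{relabel} and Lemma~\ref{nevproperties} to justify the relabeling and invertibility, whereas the paper just asserts both; otherwise the arguments are identical.
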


\begin{proof}
    Let $\cA_H$ be the colored adjacency matrix of $H$. Because $G$ and $H$ are disconnected, we can choose some labeling of the vertices so that $$\cA_K = \begin{bmatrix}
        \cA_G & 0 \\
        0 & \cA_H \\
    \end{bmatrix}.$$ Thus,
    \begin{align*}
        f_K(z,w) &= (\cA_K^{-1})_{(1,1)} \\
        &= \begin{bmatrix}
        \cA_G & 0 \\
        0 & \cA_H \\
    \end{bmatrix}^{-1}_{(1,1)} \\
    &= \begin{bmatrix}
        \cA_G^{-1} & 0 \\
        0 & \cA_H^{-1} \\
    \end{bmatrix}_{(1,1)} \\
    &= (\cA_G^{-1})_{(1,1)} \\
    &= f_G(z,w). \\
    \end{align*}
\end{proof}

As we will be using Schur complements, we need the following observation about the blocks of colored adjacency matrices. 
\begin{lemma}\label{invertible}
If $(z, w) \in \Pi^2$ and 
\[
\cA_G = \bbm a_{11} & A_{12} \\ A_{12}^T & A_{22} \ebm
\]
then $A_{22}$ is invertible.

\end{lemma}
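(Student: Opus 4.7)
The plan is to show that the imaginary part of $\cA_G$, viewed as a complex symmetric matrix, is strictly definite on $\Pi^2$, and then use this to conclude invertibility of any principal submatrix, including $A_{22}$.

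First I would observe that $\cA_G = A_G - zY - w(I-Y)$ where $A_G$ is a real symmetric matrix and $Y, I-Y$ are diagonal projections. Writing $z = x_1 + i y_1$ and $w = x_2 + i y_2$ with $y_1, y_2 > 0$, we can split
\[
\cA_G = \bigl( A_G - x_1 Y - x_2(I-Y) \bigr) + i \bigl(-y_1 Y - y_2 (I-Y)\bigr),
\]
so the ``imaginary part'' (in the sense of the symmetric decomposition $\cA_G = B + iC$ with $B, C$ real symmetric) is the diagonal matrix $C = -y_1 Y - y_2(I-Y)$. Since $Y$ is diagonal with entries in $\{0,1\}$ and $Y + (I-Y) = I$, each diagonal entry of $C$ equals either $-y_1$ or $-y_2$, both strictly negative. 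Hence $C$ is strictly negative definite.

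Next I would note that the decomposition
\[
\cA_G = \bbm a_{11} & A_{12} \\ A_{12}^T & A_{22} \ebm
\]
is block-wise, and the corresponding decomposition of $C$ gives a principal diagonal submatrix $C_{22}$ which, being a principal submatrix of a strictly negative definite diagonal matrix, is itself strictly negative definite. Thus $A_{22} = B_{22} + i C_{22}$ with $B_{22}$ real symmetric and $C_{22}$ strictly negative definite.

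Finally, to conclude invertibility: suppose $A_{22} v = 0$ for some $v \in \C^{n-1}$. Then $v^* A_{22} v = 0$; taking imaginary parts gives $v^* C_{22} v = 0$. Since $C_{22}$ is real strictly negative definite, this forces $v = 0$. I expect the main (minor) obstacle is just remembering that $\cA_G$ is complex symmetric rather than Hermitian, so that one must use the symmetric real/imaginary decomposition rather than the self-adjoint one — but once that is set up, invertibility follows from the definiteness argument above.
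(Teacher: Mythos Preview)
Your argument is correct. The imaginary-part decomposition is clean, and the kernel argument at the end is the right way to conclude: since $B_{22}$ and $C_{22}$ are real symmetric (hence Hermitian), $v^*B_{22}v$ and $v^*C_{22}v$ are real, so the imaginary part of $v^*A_{22}v$ is exactly $v^*C_{22}v$, and negative definiteness forces $v=0$.

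The paper's proof takes a different, more structural route: it simply observes that $A_{22}$ is itself the colored adjacency matrix of the induced subgraph on vertices $2,\dots,n$ (with the inherited $z/w$ coloring), and then invokes Lemma~\ref{nevproperties} directly to conclude that $A_{22}$ is invertible on $\Pi^2$. In other words, the paper recycles the black-box fact that \emph{any} colored adjacency matrix is invertible on $\Pi^2$, whereas you essentially reprove that fact from scratch for the submatrix. Your approach is self-contained and makes transparent the mechanism (strict definiteness of the imaginary part) that underlies Lemma~\ref{nevproperties}; the paper's approach is shorter and emphasizes the graph-theoretic interpretation of $A_{22}$ that is used again later. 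Either is fine here.
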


\begin{proof}
The block $A_{22}$ can be viewed as colored adjacency matrix, and so by Lemma \ref{nevproperties} is invertible.
\end{proof}

\section{Graph products and representing functions}

We now investigate the relationship between representing functions induced by taking certain  graph products. 

The results in this chapter are given in the one-variable case in \cite{ahl22} as Proposition 2.1 and Proposition 2.5, where they are given without proof (essentially, they are special cases of much more general constructions in \cite{Speicher1993BooleanC, Popa2008ANP, muraki}). In the following sections, we generalize these results to two variables via Schur complement arguments.

Note that the identities proven in this section will involve the reciprocal of our representing function, sometimes called the reciprocal Cauchy transform \cite{ahl22}. As a notational convenience, we denote
\[
g_G(z,w) = \frac{1}{f_G(z,w)}.
\]

\subsection{Star Products}

Let $G$ and $G'$ be rooted graphs with corresponding roots $r_1, r_2$ and $z, w$ vertex coloring. The \dfn{star product} of $G$ and $G'$, denoted $G \star G'$, attaches $G$ and $G'$ at their root vertices (Figure \ref{fig:starprod}).
\begin{figure}[!htbp]
    \centering
    \begin{tikzpicture} [scale=.7,auto=center, every node/.style={circle,fill=blue!20}]
    \node (g1) at (0, 0) {};
    \node (g2) at (2, 0) {};
    \node[fill=red] (g3) at (2, 2) {};
    \node (g4) at (0, 2) {};

    \node[fill=red] (h1) at (4, 0) {};
    \node (h2) at (6, 0) {};
    \node (h3) at (5, 2) {};

    \node[fill=white, scale=1] (s) at (3, 1) {$\star$};
    \node[fill=white] (eq) at (7, 1) {=};

    \node (1) at (8, 1) {};
    \node (2) at (9, 0) {};
    \node[fill=red] (3) at (10, 1) {};
    \node (4) at (9, 2) {};
    \node (5) at (12, 2) {};
    \node (6) at (12, 0) {};

    \draw (g1) -- (g2);
    \draw (g2) -- (g3);
    \draw (g3) -- (g4);
    \draw (g4) -- (g1);

    \draw (h1) -- (h2);
    \draw (h2) -- (h3);
    \draw (h3) -- (h1);

    \draw (1) -- (2);
    \draw (2) -- (3);
    \draw (3) -- (4);
    \draw (4) -- (1);
    \draw (3) -- (5);
    \draw (5) -- (6);
    \draw (6) -- (3);
    
    \end{tikzpicture}
    \caption{Example of a star product of two graphs.}
    \label{fig:starprod}
\end{figure}
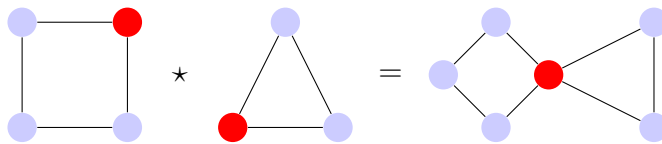

\begin{theorem}
Let $G$ and $H$ be rooted graphs with a shared color of $a_{11}$ on their root vertices. Define the zero graph to be a single vertex with color $a_{11}$. Then the Nevanlinna representation of $G \star H$ with respect to the root vertex satisfies the identity
\[
g_{G\star H}(z,w) = g_G(z,w) + g_{H}(z,w) - g_0(z,w).
\]

\end{theorem}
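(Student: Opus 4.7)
The plan is to reduce everything to a single Schur-complement calculation, exploiting the fact that removing the shared root from $G \star H$ disconnects the graph into the non-root vertices of $G$ and those of $H$.

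Concretely, I would first label the shared root as vertex $1$ in each graph, so that the colored adjacency matrices take the block form
\[
\cA_G = \begin{bmatrix} a_{11} & B^{T} \\ B & C \end{bmatrix}, \qquad
\cA_H = \begin{bmatrix} a_{11} & D^{T} \\ D & E \end{bmatrix},
\]
where $C$ and $E$ are the colored adjacency matrices of the subgraphs $G \setminus \{r_1\}$ and $H \setminus \{r_2\}$ (these are themselves colored adjacency matrices, hence invertible on $\Pi^2$ by Lemma \ref{invertible}). Because the only vertex of $G$ adjacent to anything in $H$ in the star product is the shared root, removing that vertex separates the two halves, so after an appropriate relabeling (justified by Lemma \ref{relabel})
\[
\cA_{G \star H} = \begin{bmatrix} a_{11} & B^{T} & D^{T} \\ B & C & 0 \\ D & 0 & E \end{bmatrix}.
\]

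Next I would apply the standard Schur-complement expression for the $(1,1)$ entry of the inverse of a block matrix. Writing the bottom-right block as $\mathrm{diag}(C,E)$, whose inverse is $\mathrm{diag}(C^{-1},E^{-1})$, a direct block multiplication gives
\[
f_{G \star H}(z,w) = \bigl(\cA_{G\star H}^{-1}\bigr)_{(1,1)} = \frac{1}{a_{11} - B^{T} C^{-1} B - D^{T} E^{-1} D}.
\]
The same Schur-complement argument applied to $\cA_G$ and $\cA_H$ individually yields
\[
f_G(z,w) = \frac{1}{a_{11} - B^{T} C^{-1} B}, \qquad f_H(z,w) = \frac{1}{a_{11} - D^{T} E^{-1} D},
\]
while for the zero graph $\cA_0 = [a_{11}]$, so $f_0(z,w) = 1/a_{11}$.

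Taking reciprocals, $g_{G\star H} = a_{11} - B^{T}C^{-1}B - D^{T}E^{-1}D$, $g_G = a_{11} - B^{T}C^{-1}B$, $g_H = a_{11} - D^{T}E^{-1}D$, and $g_0 = a_{11}$, and the identity $g_{G\star H} = g_G + g_H - g_0$ is immediate. The only conceptual step is recognizing the zero-off-diagonal block that arises from disconnection of the non-root vertices; there is no real obstacle beyond making sure the block decomposition and Schur-complement formula are applied correctly, and that the relevant blocks are invertible (which Lemma \ref{invertible} supplies).
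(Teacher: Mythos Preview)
Your proof is correct and follows essentially the same Schur-complement approach as the paper. The only cosmetic difference is that the paper performs two sequential Schur complements (first reducing by the $H$-block, then by the $G$-block), whereas you exploit the block-diagonal structure $\mathrm{diag}(C,E)$ to do it in a single step; the content is identical.
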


\begin{proof}
Let $\cA_G = \begin{bmatrix}
-a_{11} & A_{12} \\
A_{12}^T & A_{22} 
\end{bmatrix}$  be the colored adjacency matrix for $G$ and $\cA_H = \begin{bmatrix}
-a_{11} & B_{12} \\
B_{12}^T & B_{22} 
\end{bmatrix}$ be the colored adjacency matrix of $H$, with both graphs arranged so that the root is labeled the first vertex. The graph $0$ has colored adjacency matrix $\begin{bmatrix}
    -a_{11}\\
\end{bmatrix}$.

 So by Schur complements, \begin{align*}
    g_{G}(z,w) &=\begin{bmatrix}
-a_{11} & A_{12} \\
A_{12}^T & A_{22} 
\end{bmatrix}\inv_{(1,1)} = -a_{11}\inv - A_{12}A_{22}^{-1}A_{12}^T \\
    g_{H}(z,w) &= \begin{bmatrix}
-a_{11} & B_{12} \\
B_{12}^T & B_{22} 
\end{bmatrix}\inv_{(1,1)} =  -a_{11}\inv - B_{12}B_{22}^{-1}B_{12}^T \\
    g_0(z,w) &= -a_{11}\inv
\end{align*}

Note that $G \star H$ has colored adjacency matrix
$\cA_{G \star H} = \begin{bmatrix}
    \cA_G &\begin{pmatrix}
    B_{12}\\
    0
    \end{pmatrix} \\
    \begin{pmatrix}
    B_{12}^T &
    0
    \end{pmatrix} & B_{22}
\end{bmatrix}$.  Since $f_{G \star H}(z,w) = (\cA_{G \star H}^{-1})_{(1,1)}$, we can invoke Lemma \ref{invertible} and compute this by the Schur complement, which gives
\begin{align*}
     f_{G \star H}(z,w)  &= \begin{bmatrix}
    \cA_G &\begin{pmatrix}
    B_{12}\\
    0
    \end{pmatrix} \\
    \begin{pmatrix}
    B_{12}^T &
    0
    \end{pmatrix} & B_{22}
\end{bmatrix}\inv_{(1,1)} \\
&=\left(\cA_G - \begin{bmatrix}
    B_{12}\\0
\end{bmatrix}B_{22}^{-1}\begin{bmatrix}
    B_{12}^T & 0
\end{bmatrix}\right)^{-1}_{(1,1)} \\
&= \left(\begin{bmatrix}
-a_{11} & A_{12} \\
A_{12}^T & A_{22} 
\end{bmatrix} - 
\begin{bmatrix}
    B_{12}B_{22}^{-1}B_{12}^T & 0 \\
    0 & 0
\end{bmatrix}\right)^{-1}_{(1,1)} \\
&= \begin{bmatrix}
     g_{H}(z,w) & A_{12}\\
    A_{12}^T & A_{22}
\end{bmatrix}^{-1}_{(1,1)}\\
&= (g_{H}(z,w) - A_{12}A_{22}^{-1}A_{12}^T)^{-1}\\
&=(g_{H}(z,w) + g_G(z,w) + a_{11})^{-1} \\
\end{align*} 

As these quantities are scalar, taking the inverse of both sides and substituting $g_0(z,w)$ for $-a_{11}$  gives us
\[
    g_{G\star H}(z,w) = g_G(z,w) + g_{H}(z,w) - g_0(z,w).
\]
\end{proof}

From a graph theory perspective, the star product is a reciprocal sum, but one must subtract the doubled self-loop introduced at the combining vertex. Note that this represents a type of ``generalized resistor law'' result, as
\[
\frac{1}{f_{G \star G'}} = \frac{1}{f_G} + \frac{1}{f_{G'}} -z.
\]

\begin{example}
Consider the graphs $G$ and $H$ from Figure \ref{fig:starprod} and suppose that $\A_G$ and $\A_H$ are given by 
\begin{align*}
\A_G &= \left[\begin{smallmatrix} -z & 1 & 0 & 1 \\ 1 & -w & 1 & 0 \\ 0 & 1 & -w & 1 \\ 1 & 0 & 1 & -w \end{smallmatrix}\right] \\
\A_H &= \left[\begin{smallmatrix} -z & 1 & 1 \\ 1 & -z & 1 \\ 1 & 1 & -w \end{smallmatrix}\right].
\end{align*}
Then
\[
\A_{G \star H} = \left[ \begin{smallmatrix} -z & 1 & 0 & 1 & 1 & 1 \\ 1 & -w & 1 & 0 & 0 & 0  \\ 0 & 1 & -w & 1 & 0 & 0 \\ 1 & 0 & 1 & -w & 0 & 0 \\ 1 & 0 & 0 & 0 & -z & 1 \\ 1 & 0 & 0 & 0 & 1 & -w \end{smallmatrix} \right].
\]
Calculating, we get
\begin{align*}
g_G + g_H &= \frac{w^3 z-2 w^2-2 w z}{2 w-w^3} + \frac{-w z^2+w+2 z+2}{w z-1} \\
&= \frac{-2 w^3 z^2+w^3+5 w^2 z+2 w^2+4 w z^2-4 w-6 z-4}{\left(w^2-2\right) (w z-1)} \\
&= g_{G\star H} - z.
\end{align*}
\end{example}

Graphs that decompose as star products also have an interpretation as a more general class of colored graphs, where we recolor vertices with functions that represent graph structures.

\begin{theorem}\label{thm_retract}
    Let G be a graph that can be written as $G = H \star K$ connected only at vertex i. Then $f_G(z,w) = f_{H'}(z,w)$, where $\A_{H'}$ represents the same graph as $\A_H$ except that vertex $i$ is colored with $-g_K(z,w)$. 
\end{theorem}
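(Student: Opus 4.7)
The plan is to prove this by a direct Schur complement argument, analogous to the proof of the star product identity. The key observation is that gluing $K$ onto $H$ at the joining vertex alters only a single diagonal entry of $\A_H$ after one block inversion, and that entry turns out to be precisely $g_K(z,w)$. By Lemma \ref{relabel}, I may relabel vertices freely without changing representing functions, so indexing can be chosen for maximum convenience.

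First I would label vertices so that vertex $1$ of $H$ is also vertex $1$ of $G$, the remaining vertices of $H$ come next (with the joining vertex occupying some position $p$), and the vertices of $K$ other than the join come last. Arrange $\A_K$ so that the joining vertex is listed first, writing
\[
\A_K = \begin{bmatrix} -a & k_{12} \\ k_{12}^T & K_{22} \end{bmatrix},
\]
where $-a$ is the shared color at the join. In this labeling the colored adjacency matrix of $G$ has the block form
\[
\A_G = \begin{bmatrix} \A_H & \tilde X \\ \tilde X^T & K_{22} \end{bmatrix},
\]
where $\tilde X$ has the row $k_{12}$ in position $p$ and zeros elsewhere.

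Next I would apply the block inversion formula, justified by Lemma \ref{invertible} (which guarantees that $K_{22}$ is invertible on $\Pi^2$), to obtain
\[
(\A_G^{-1})_{(1,1)} = \bigl(\A_H - \tilde X K_{22}^{-1} \tilde X^T\bigr)^{-1}_{(1,1)}.
\]
Because $\tilde X$ has only its $p$-th row nonzero, the correction $\tilde X K_{22}^{-1} \tilde X^T$ is a matrix whose only nonzero entry sits at position $(p,p)$ and equals the scalar $k_{12} K_{22}^{-1} k_{12}^T$. Hence $\A_H - \tilde X K_{22}^{-1} \tilde X^T$ is exactly $\A_H$ with its $(p,p)$ entry replaced by $-a - k_{12} K_{22}^{-1} k_{12}^T$. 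A Schur complement computation of $f_K$ at the joining vertex, identical to the one used in the star product proof, identifies this scalar as $g_K(z,w)$. The modified matrix is therefore $\A_{H'}$, and $f_G(z,w) = f_{H'}(z,w)$ follows.

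I do not anticipate a serious obstacle: the argument reduces to one block inversion together with careful bookkeeping of indices. The one subtlety worth flagging is that $f_G$ is evaluated at vertex $1$ of $H$ rather than at the joining vertex, so the star product identity cannot be applied directly; the Schur complement must be pushed through the entire $H$-block, which is why the reduction produces a recolored $\A_{H'}$ rather than a purely scalar identity among $g$-functions.
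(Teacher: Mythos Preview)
Your proposal is correct and follows essentially the same route as the paper: both proofs apply a single Schur complement to eliminate the $K_{22}$ block, observe that because the two graphs are joined at only one vertex the correction matrix has a single nonzero entry at the joining position, and identify that entry as $g_K(z,w)$. The only cosmetic difference is that the paper writes $\A_H$ as a $3\times 3$ block with vertex $i$ separated out explicitly, whereas you keep $\A_H$ whole and track the joining vertex through the sparsity pattern of $\tilde X$.
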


\begin{proof}
    Without loss of generality, suppose that we calculate the Nevanlinna representation with respect to $e_1$. Let $H$ have colored adjacency matrix $\A_H$, a block matrix that can be written as $$\A_H = \begin{bmatrix}
        A_{11} & A_{1i} & A_{12} \\
        A_{1i}^T & -a_{ii} & A_{i2} \\
        A_{12}^T & A_{i2}^T & A_{22}
    \end{bmatrix}$$ Similarly, $$\A_K = \begin{bmatrix}
        -a_{ii} & B_{12} \\
        B_{12}^T & B_{22}
    \end{bmatrix}$$  is the colored adjacency matrix for $K$ where vertex 1 is the shared root in the star product decomposition. As the graphs are connected at the single vertex $i$, $G$ has colored adjacency matrix 
    \begin{align*}    
    \A_G &= \begin{bmatrix}
        A_{11} & A_{1i} & A_{12} & 0 \\
        A_{1i}^T & -a_{ii} & A_{i2} & B_{12}\\
        A_{12}^T & A_{i2}^T & A_{22} & 0 \\
        0 & B_{12}^T & 0 & B_{22}
    \end{bmatrix} 
    \end{align*}
    So, applying Schur complements, and noting that in the scalar case that matrix inverse and reciprocals coincide,
    \begin{align*}
    f_G(z,w) &= (\A_G^{-1})_{(1,1)}\\
    &= \left(\A_H - 
    \begin{bmatrix}
        0 \\ B_{12} \\ 0
    \end{bmatrix}B_{22}^{-1}
    \begin{bmatrix}
      0 & B_{12}^T & 0  
    \end{bmatrix}\right)
    ^{-1}_{(1,1)} \\
    &= 
    \left(\begin{bmatrix}
        A_{11} & A_{1i} & A_{12} \\
        A_{1i}^T & -a_{ii} & A_{i2} \\
        A_{12}^T & A_{i2}^T & A_{22}
    \end{bmatrix} - 
    \begin{bmatrix}
        0 & 0 & 0 \\
        0 & B_{12}B_{22}^{-1}B_{12}^T & 0 \\
        0 & 0 & 0
    \end{bmatrix}\right)
    ^{-1}_{(1,1)} \\
    &=
    \begin{bmatrix}
        A_{11} & A_{1i} & A_{12} \\
        A_{1i}^T & g_K(z,w) & A_{i2} \\
        A_{12}^T & A_{i2}^T & A_{22}
    \end{bmatrix}
    ^{-1}_{(1,1)}\\
    &= f_{H'}(z,w).
    \end{align*}
\end{proof}
\black

This means that structures in graphs that are connected to the rest of the graph by a single vertex can be reduced down and fed into the representing function as essentially the new color of that vertex. The result of this is that loops are the irreducible structures of graphs with respect to this representation. One can view this as morally an ``equivalent resistor'' replacement, and suggests that significantly more complicated graph reductions are possible with commensurately more complicated linear algebra.

\begin{example}
\begin{figure}[!htbp]
    \centering
    \begin{tikzpicture} [scale=.7,auto=center, every node/.style={circle,fill=blue!20}]

    \node[label=left: {$z$}] (1) at (0, 1) {};
    \node[label=below:{$z$}] (2) at (1, 0) {};
    \node[fill=white] (H) at (1, -2) {$H$};
    \node[fill=white] (H) at (2.5, -2) {$*$};
    \node[fill=white] (H) at (4, -2) {$K$};
    \node[fill=red, label=below:{$z$}] (3) at (2, 1) {};
    \node[label=above:{$w$}] (4) at (1, 2) {};
    \node[label=above:{$z$}] (5) at (4, 2) {};
    \node[label=below:{$w$}] (6) at (4, 0) {};
    \node[fill=white] (eq) at (5,1) {$\sim$};

        \node[label=left: {$z$}] (7) at (7, 1) {};
    \node[label=below:{$z$}] (8) at (8, 0) {};
    \node[fill=red, label=right:{$-g_K$}] (9) at (9, 1) {};
    \node[label=above:{$w$}] (10) at (8, 2) {};

    \draw (1) -- (2);
    \draw (2) -- (3);
    \draw (3) -- (4);
    \draw (4) -- (1);
    \draw (3) -- (5);
    \draw (5) -- (6);
    \draw (6) -- (3);
    \draw (7) -- (8);
    \draw (8) -- (9);
    \draw (9) -- (10);
    \draw (10) -- (7);

    \end{tikzpicture}
    \caption{Example of a colored graph reduction.}
    \label{fig:reduction}

\end{figure}
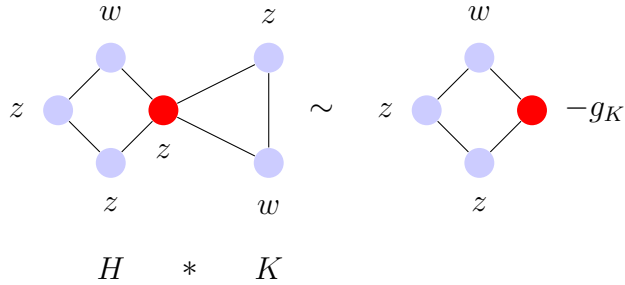

In Figure \ref{fig:reduction}, we give a graphical example of the matrix relation 
\[
\bbm  -z & 1 & 1 & 0 & 0 & 0 \\ 1 & -w & 0 & 1 & 0 & 0\\ 1 & 0 & -z & 1 & 0 & 0 \\ 0 & 1 & 1 & -z & 1 & 1 \\ 0 & 0 & 0 & 1 & -z & 1 \\ 0 & 0 & 0 & 1 & 1 & -w \ebm\inv_{(1,1)} = \bbm -z & 1 & 1 & 0 \\ 1 & -w & 0 & 1 \\ 1 & 0 & -z & 1 \\ 0 & 1 & 1 & g_H \ebm\inv_{(1,1)}
\]
where 
\[
g_K = 1/f_K =1/ \bbm -z & 1 & 1 \\ 1 & -z & 1 \\ 1 & 1 & -w \ebm\inv_{(1,1)} = \frac{-w z^2+w+2 z+2}{w z-1}.
\]
In either case, the representing function for the colored graphs is
\[
f_G(z, w) = \frac{-w^2 z^3+2 w^2 z+3 w z^2+2 w z-w-z}{w^2 z^4-3 w^2 z^2+w^2-4 w z^3-2 w z^2+4 w z+2 w+3 z^2+2 z}.
\]
\black
\end{example}

\subsection{Comb Products}

Let $G$ be a graph and $G'$ be a rooted graph. The \dfn{comb product} of $G$ and $G'$ is the star product of $G$ with $G'$, using every vertex of $G$ as a root; see Figure \ref{fig:combprod}.

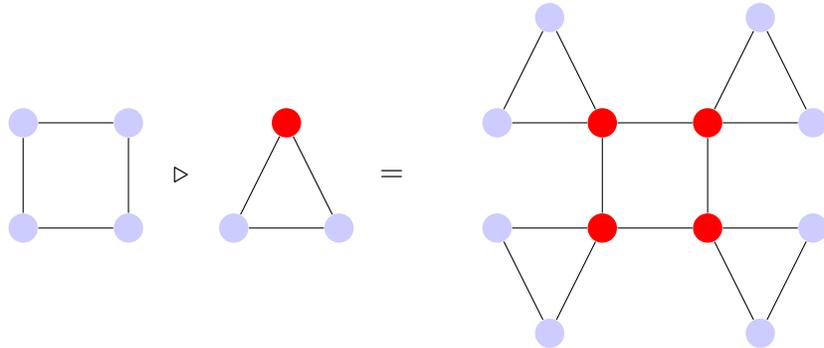
\begin{figure}[!htbp]
    \centering
    \begin{tikzpicture} [scale=.7,auto=center, every node/.style={circle,fill=blue!20}]
    \node (g1) at (0, 0) {};
    \node (g2) at (2, 0) {};
    \node (g3) at (2, 2) {};
    \node (g4) at (0, 2) {};

    \node (h1) at (4, 0) {};
    \node (h2) at (6, 0) {};
    \node[fill=red] (h3) at (5, 2) {};

    \node[fill=white, scale = 1] (c) at (3, 1) {$\comb$};
    \node[fill=white] (eq) at (7, 1) {=};

    \node[fill=red] (1) at (11, 0) {};
    \node[fill=red] (2) at (13, 0) {};
    \node[fill=red] (3) at (13, 2) {};
    \node[fill=red] (4) at (11, 2) {};
    \node (11) at (9, 0) {};
    \node (12) at (10, -2) {};
    \node (21) at (15, 0) {};
    \node (22) at (14, -2) {};
    \node (31) at (15, 2) {};
    \node (32) at (14, 4) {};
    \node (41) at (9, 2) {};
    \node (42) at (10, 4) {};

    \draw (g1) -- (g2);
    \draw (g2) -- (g3);
    \draw (g3) -- (g4);
    \draw (g4) -- (g1);

    \draw (h1) -- (h2);
    \draw (h2) -- (h3);
    \draw (h3) -- (h1);

    \draw (1) -- (2);
    \draw (11) -- (12);
    \draw (11) -- (1);
    \draw (12) -- (1);
    
    \draw (2) -- (3);
    \draw (21) -- (22);
    \draw (21) -- (2);
    \draw (22) -- (2);
    
    \draw (3) -- (4);
    \draw (31) -- (32);
    \draw (31) -- (3);
    \draw (32) -- (3);
    
    \draw (4) -- (1);
    \draw (41) -- (42);
    \draw (41) -- (4);
    \draw (42) -- (4);

    \end{tikzpicture}
    \caption{Example of a comb product of two graphs.}
    \label{fig:combprod}
\end{figure}

The following corollary gives a linear algebraic proof of the the one-variable composition identity in \cite[Proposition~2.5]{ahl22}:

\begin{corollary}\label{staridentity}
    Let $G$ be a graph and $G'$ be a rooted graph. Then the following formula holds:
    \[
    f_{G \comb G'} (z) = f_G (- g_{G'} (z)).
    \]
\end{corollary}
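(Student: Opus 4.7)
The plan is to reduce $\cA_{G \comb G'}$ to a matrix of the form $A_G + g_{G'}(z) I_n$ by a single large Schur complement; comparing $(1,1)$-entries against the one-variable form $f_G(\zeta) = ((A_G - \zeta I_n)^{-1})_{(1,1)}$ of Theorem \ref{blackmagicbetter} then immediately yields $f_{G \comb G'}(z) = f_G(-g_{G'}(z))$. An equivalent route would be to apply Theorem \ref{thm_retract} once per vertex of $G$, recoloring each vertex to $-g_{G'}(z)$ in turn; I find the direct Schur complement slightly cleaner since everything reduces in one step.

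To set up, I would first order the vertices of $G \comb G'$ so that the $n$ vertices of $G$ (each identified with the root of its attached copy of $G'$) come first, followed in turn by the non-root vertices of each of the $n$ copies of $G'$. Writing $\cA_{G'} = \bpm -z & B_{12} \\ B_{12}^T & B_{22} \epm$ with the root of $G'$ labeled as its first vertex, this ordering produces the block form
\[
\cA_{G \comb G'} = \begin{bmatrix} \cA_G & I_n \otimes B_{12} \\ I_n \otimes B_{12}^T & I_n \otimes B_{22} \end{bmatrix},
\]
and Lemma \ref{invertible} applied to the lower-right block makes the Schur complement available.

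The heart of the argument is then twofold. First, the Kronecker-product structure forces
\[
(I_n \otimes B_{12})(I_n \otimes B_{22})^{-1}(I_n \otimes B_{12}^T) = (B_{12} B_{22}^{-1} B_{12}^T)\, I_n,
\]
so the Schur complement modifies $\cA_G$ by a scalar multiple of the identity. Second, applying the Schur complement directly to $\cA_{G'}$ itself gives $g_{G'}(z) = -z - B_{12} B_{22}^{-1} B_{12}^T$, identifying this scalar as $-z - g_{G'}(z)$. Combining,
\[
\cA_G - (B_{12} B_{22}^{-1} B_{12}^T)\, I_n = (A_G - z I_n) + (z + g_{G'}(z))\, I_n = A_G + g_{G'}(z) I_n,
\]
and inverting and reading off the $(1,1)$-entry yields $f_G(-g_{G'}(z))$. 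The main obstacle I anticipate is purely bookkeeping: verifying that $\cA_{G \comb G'}$ genuinely takes the claimed Kronecker form --- the key point being that distinct copies $G'_j$ share no non-root vertices --- and matching signs carefully between the ``color'' and ``diagonal entry'' conventions.
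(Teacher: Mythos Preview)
Your argument is correct. Both your proof and the paper's rest on the same Schur-complement mechanism, but the packaging differs: the paper invokes Theorem~\ref{thm_retract} once for each vertex of $G$, retracting the attached copies of $G'$ one at a time until every vertex of $G$ carries the loop weight $g_{G'}(z)$, whereas you perform a single block Schur complement on $\cA_{G\comb G'}$ and exploit the Kronecker structure $I_n\otimes B_{22}$ to collapse all $n$ copies simultaneously. You explicitly note the iterated-retraction route as an alternative, which is exactly what the paper does. Your direct computation is arguably cleaner and makes the role of the Kronecker block structure transparent; the paper's version has the advantage of reusing Theorem~\ref{thm_retract} as a black box, so no new block manipulation is needed. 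Neither approach requires anything the other does not already implicitly contain.
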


\begin{proof}
    The comb product graph $G \comb G'$ is constructed by attaching a copy of $G'$ to each vertex of $G$. For each vertex of $G$, use Theorem \ref{thm_retract} to retract its copy of $G'$ into a loop with weight $g_{G} (z)$. Thus, this series of retractions results in the graph $G$ with all vertices colored with $-g_{G'} (z)$, so $f_{G \comb G'} (z) = f_G (- g_{G'} (z))$.
\end{proof}

In the two-variable case, we can restrict this product to just connecting vertices with the same loop weight, i.e. a ``compatible" comb product. Let $G$ be a graph and $G'$ a rooted graph with $z, w$ vertex coloring. The \dfn{$z$-comb product} of $G$ and $G'$, denoted $G \comb_z G'$, attaches a copy of $G'$ to each $z$-vertex of $G$. An example is provided below in Figure \ref{fig:zcomb}:

\begin{figure}[!htbp]
    \centering
    \begin{tikzpicture} [scale=.7,auto=center, every node/.style={circle,fill=blue!20}]
    \node[label=below:$z$] (g1) at (0, 0) {};
    \node[label=below:$z$] (g2) at (2, 0) {};
    \node[label=above:$z$] (g3) at (2, 2) {};
    \node[label=above:$w$] (g4) at (0, 2) {};

    \node[label=below:$z$] (h1) at (4, 0) {};
    \node[label=below:$w$] (h2) at (6, 0) {};
    \node[label=above:$z$, fill=red] (h3) at (5, 2) {};

    \node[fill=white, scale = 1] (c) at (3, 1) {$\comb_z$};
    \node[fill=white] (eq) at (7, 1) {=};

    \node[fill=red, label=below:$z$] (1) at (11, 0) {};
    \node[fill=red, label=below:$z$] (2) at (13, 0) {};
    \node[fill=red, label=above:$z$] (3) at (13, 2) {};
    \node[label=above:$w$] (4) at (11, 2) {};
    \node[label=below:$z$] (11) at (9, 0) {};
    \node[label=below:$w$] (12) at (10, -2) {};
    \node[label=below:$z$] (21) at (15, 0) {};
    \node[label=below:$w$] (22) at (14, -2) {};
    \node[label=above:$z$] (31) at (15, 2) {};
    \node[label=above:$w$] (32) at (14, 4) {};

    \draw (g1) -- (g2);
    \draw (g2) -- (g3);
    \draw (g3) -- (g4);
    \draw (g4) -- (g1);

    \draw (h1) -- (h2);
    \draw (h2) -- (h3);
    \draw (h3) -- (h1);

    \draw (1) -- (2);
    \draw (11) -- (12);
    \draw (11) -- (1);
    \draw (12) -- (1);
    
    \draw (2) -- (3);
    \draw (21) -- (22);
    \draw (21) -- (2);
    \draw (22) -- (2);
    
    \draw (3) -- (4);
    \draw (31) -- (32);
    \draw (31) -- (3);
    \draw (32) -- (3);
    
    \draw (4) -- (1);

    \end{tikzpicture}
    \caption{Example of a $z$-compatible comb product of two graphs.}
    \label{fig:zcomb}
\end{figure}
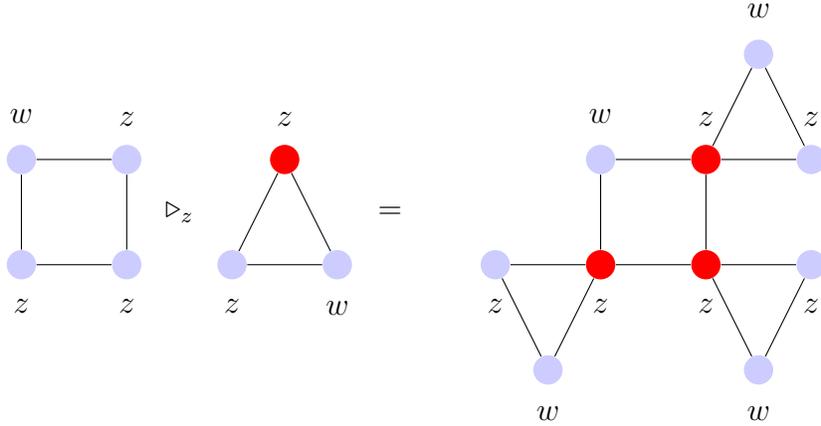

\begin{corollary}\label{combidentity}
Suppose that $G$ and $G'$ are graphs with vertex coloring $z, w$. Then
\[
f_{G \comb_z G'} (z, w) = f_G(-g_{G'}(z, w), w).
\]
\end{corollary}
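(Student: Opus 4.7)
The approach is to extend the strategy of Corollary \ref{staridentity} while carefully tracking which vertices of $G$ get recolored. Since $G \comb_z G'$ is built by attaching a copy of $G'$ (via its $z$-colored root) to each $z$-colored vertex of $G$, each attached copy meets the rest of the graph at exactly one vertex. This is the hypothesis of Theorem \ref{thm_retract}, so the plan is to apply that theorem once per copy of $G'$ and then read off the result.

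First, I would pick any $z$-vertex $i_1$ of $G$ and invoke Theorem \ref{thm_retract} on the decomposition of $G \comb_z G'$ as (everything else) $\star$ (the copy of $G'$ at $i_1$), replacing that copy with a recoloring of $i_1$ by $-g_{G'}(z,w)$. Since the remaining copies of $G'$ are still attached to the evolving graph through only a single vertex (the retraction only changes a diagonal entry, not the off-diagonal structure), I can iterate, retracting one copy at a time, until every copy of $G'$ has been absorbed. What remains is a colored version of $G$ in which each original $z$-vertex now carries the new color $-g_{G'}(z,w)$ while each $w$-vertex is untouched.

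The last step is to match this retracted adjacency matrix against the substitution on the right-hand side. Since $\cA_G(z,w) = A_G - zY - w(I-Y)$ places $-z$ precisely on the diagonal entries indexed by the $z$-vertices of $G$, substituting $z \mapsto -g_{G'}(z,w)$ replaces each such diagonal entry by $g_{G'}(z,w)$, which is exactly the retracted colored adjacency matrix. Taking the $(1,1)$ entry of the inverse (assuming, as usual, that vertex $1$ is the distinguished root and that the retraction does not touch it, which we can arrange by Lemma \ref{relabel} if $1$ is itself a $z$-vertex simply by treating its copy last or noting the resulting substitution agrees on that diagonal entry as well) then yields $f_G(-g_{G'}(z,w), w) = f_{G \comb_z G'}(z,w)$.

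The main subtlety, and the only nontrivial point beyond bookkeeping, is justifying the iterated use of Theorem \ref{thm_retract}: each retraction alters only a single diagonal entry of the colored adjacency matrix, so the underlying graph structure is preserved and each remaining copy of $G'$ continues to attach through a single vertex. Once this is granted, the identity reduces to the observation that recoloring all $z$-vertices to $-g_{G'}(z,w)$ amounts, at the level of the colored adjacency matrix, to the scalar substitution $z \mapsto -g_{G'}(z,w)$ in $\cA_G$.
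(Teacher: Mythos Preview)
Your proposal is correct and follows essentially the same approach as the paper: iterate Theorem \ref{thm_retract} over each attached copy of $G'$, so that every $z$-vertex of $G$ is recolored by $-g_{G'}(z,w)$, and then identify the resulting colored adjacency matrix with $\cA_G(-g_{G'}(z,w),w)$. Your version is in fact more careful than the paper's, since you explicitly justify why the iteration is legitimate (each retraction alters only a diagonal entry and leaves the single-vertex attachment of the remaining copies intact).
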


\begin{proof}
    Similar to Corollary \ref{staridentity}, use Theorem \ref{thm_retract} to retract every copy of $G'$ attached to a $z$-vertex in $G$ into a loop of weight $g_G (z, w)$. Since we have only altered all $z$-colored vertices with the same function, then $f_{G \comb_z G'} (z, w) = f_G(-g_{G'}(z, w), w)$.
\end{proof}

\begin{example}

\begin{figure}[!htbp]
    \centering
    \begin{tikzpicture} [scale=.7,auto=center, every node/.style={circle,fill=blue!20}]
    \node[fill=red, label=below:$-g_{G'}$] (g1) at (8, 0) {};
    \node[fill=red, label=below:$-g_{G'}$] (g2) at (10, 0) {};
    \node[fill=red, label=above:$-g_{G'}$] (g3) at (10, 2) {};
    \node[label=above:$w$] (g4) at (8, 2) {};

    \node[fill=white] (eq) at (6.5, 1) {$\sim$};

    \node[fill=red, label=below:$z$] (1) at (1, 0) {};
    \node[fill=red, label=below:$z$] (2) at (3, 0) {};
    \node[fill=red, label=above:$z$] (3) at (3, 2) {};
    \node[label=above:$w$] (4) at (1, 2) {};
    \node[label=below:$z$] (11) at (-1, 0) {};
    \node[label=below:$w$] (12) at (0, -2) {};
    \node[label=below:$z$] (21) at (5, 0) {};
    \node[label=below:$w$] (22) at (4, -2) {};
    \node[label=above:$z$] (31) at (5, 2) {};
    \node[label=above:$w$] (32) at (4, 4) {};

    \draw (g1) -- (g2);
    \draw (g2) -- (g3);
    \draw (g3) -- (g4);
    \draw (g4) -- (g1);

    \draw (1) -- (2);
    \draw (11) -- (12);
    \draw (11) -- (1);
    \draw (12) -- (1);
    
    \draw (2) -- (3);
    \draw (21) -- (22);
    \draw (21) -- (2);
    \draw (22) -- (2);
    
    \draw (3) -- (4);
    \draw (31) -- (32);
    \draw (31) -- (3);
    \draw (32) -- (3);
    
    \draw (4) -- (1);

    \end{tikzpicture}
        \caption{Comb products and composition}
        \label{fig:combex}
\end{figure}
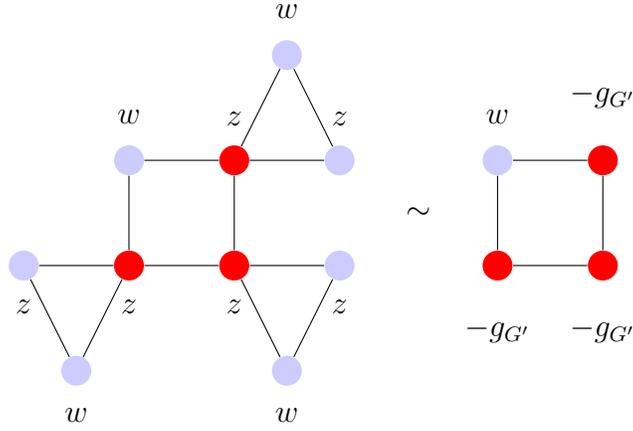
In Figure \ref{fig:zcomb}, the comb product $G \comb_z G'$ has, by the reduction theorem, the colored graph representation in Figure \ref{fig:combex} and the equivalent colored adjacency matrix
\begin{align*}
&f_{G \comb_z G'} (z, w) = \bbm -w & 1 & 1 & 0 \\ 1&  g_{G'}& 1& 0 \\ 0& 1& g_{G'}& 1 \\ 1& 0& 1& g_{G'} \ebm\inv_{(1,1)} = f_G(-g_{G'}, w) \\
&= \frac{2 (-g_{G'})-(-g_{G'})^3}{w (-g_{G'})^3-2 w (-g_{G'})-2 (-g_{G'})^2}
\end{align*}
where
\[
g_{G'}(z,w) = \frac{2 + w + 2 z - w z^2}{-1 + w z}.
\]
\end{example}


\section{Contact order}

In \cite{bps1, bps2}, Bickel, Pascoe, and Sola describe the behavior of a rational inner function near boundary singularity by considering the structure of the level curves of the function near the singularity. In particular, they prove that the branches of the level sets have local analytic parametrizations. Bickel, Pascoe and Sola work in the conformally equivalent setting of rational inner functions on the complex unit bidisk.

In this section, parallel to the work in \cite{bickelhong}, we consider functions of degree $1$ in the $w$-variable. In this case, as a main result in \cite{bickelhong}, Bickel and Hong get a refined version of Theorem \ref{blackmagicbetter} where a special singularity exists that detects this connection. Our construction produces a singularity at $(\infty, 0)$. In \cite{bickelhong}, this singularity is moved to the point $(0,0)$ with the map $z \mapsto 1/z$.

\begin{theorem}\label{blackmagicbetterbickel}[Theorem 1.1,  \cite{bickelhong}]
Let $A_G$ be the adjacency matrix of a simple undirected graph $G$ on $n$ vertices. Suppose that vertex $j$ is colored with $z$, that vertex $n$ is colored with $w$, and that vertex $j$ and $n$ are path connected.  Then
\[
f(z, w) = \mathcal (\cA_G^{-1})_{(1,1)}
\]
is a rational inner Pick function with a (regular) boundary singularity at $(\infty,0)$.
\end{theorem}

Following \cite{bps2}, this means that in a neighborhood of $\infty$, for all but a finite number of $\la \in \R$, we can solve $f(z,w) = \lambda$ to get $w = \Lambda_\la(z)$ and by \cite{bps2} this function is (locally) analytic.

\dfn{Order of contact} measures the similarity of level curves as they bunch up at a singularity - the extent to which the level curves are independent of the choice of value $\la$ can be quantified by looking at the order of vanishing of the function $\Lambda_\la(z) = \Lambda_\mu(z)$, expanded as a series in $z$ at $\infty$.

Using this idea, in \cite{bickelhong}, Bickel and Hong make the following conjecture, which they prove in several cases by way of careful analysis of Neumann series expansions and determinantal techniques. (In fact, Bickel and Hong also consider the case where the matrix $Y$ can have values of $t \in (0,1)$ on the diagonal, which leads to mixed colors on the vertices. We do not address this case here, and so that part of their conjecture remains open.)
\begin{conjecture}\label{conjecture1}
Let $G$ be a graph with a single vertex colored $w$ (say vertex $j$), and let $n$ denote the length of the shortest path connecting vertex $1$ to vertex $j$. Then the order of contact of $f(z,w) = (\A_G\inv)_{(1,1)}$ at $(\infty, 0)$ is $2n$.
\end{conjecture}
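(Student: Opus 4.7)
The plan is to decouple the $w$-dependence of $f$ from the $z$-dependence by a Schur complement at vertex $j$ and then read off the order of vanishing at $\infty$ from a Neumann/walk-generating-series expansion in $G\setminus\{j\}$. By Lemma \ref{relabel} I may assume vertex $j$ occupies the last slot, so
\[
\cA_G = \bbm B(z) & v \\ v^T & -w \ebm,
\]
where $B(z) = A_{G\setminus\{j\}} - zI_{n-1}$ is the fully $z$-colored adjacency matrix of $G\setminus\{j\}$ and $v\in\{0,1\}^{n-1}$ is the indicator vector of the neighborhood $N(j)$ of $j$ in $G$. Block inversion via Schur complement (legal by Lemma \ref{invertible}) gives
\[
f(z,w) = (\cA_G\inv)_{(1,1)} = a(z) - \frac{b(z)^2}{w + c(z)},
\]
with $a(z) = (B(z)\inv)_{(1,1)}$, $b(z) = (B(z)\inv v)_1$, and $c(z) = v^T B(z)\inv v$. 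Solving $f = \lambda$ for $w$ produces $\Lambda_\lambda(z) = b(z)^2/(a(z) - \lambda) - c(z)$, whence
\[
\Lambda_\lambda(z) - \Lambda_\mu(z) = \frac{(\lambda - \mu)\, b(z)^2}{(a(z) - \lambda)(a(z) - \mu)}.
\]
The contact order is therefore the order of vanishing of this rational function at $z = \infty$.

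For the next step, expand $B(z)\inv = -\sum_{k\geq 0} A_{G\setminus\{j\}}^k\, z^{-k-1}$ so that each entry $(B(z)\inv)_{(p,q)}$ is the negative walk-generating function $-\sum_k W_k(p,q)\, z^{-k-1}$, with $W_k(p,q)$ the number of length-$k$ walks from $p$ to $q$ in $G\setminus\{j\}$. Summing against $v$ yields $b(z) = -\sum_k W_k(1, N(j))\, z^{-k-1}$, where $W_k(1, N(j)) := \sum_{q \in N(j)} W_k(1,q)$, and $a(z) = -z\inv + O(z^{-2})$. The graph-theoretic heart of the proof is the equality $\min\{k : W_k(1, N(j)) > 0\} = n - 1$. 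For $\geq$, any walk in $G\setminus\{j\}$ from $1$ to some $q \in N(j)$ extends by the edge $qj$ to a $1$-to-$j$ walk in $G$ of length one greater, so $k + 1 \ge n$. For $\leq$, the initial $n-1$ edges of a $1$-to-$j$ geodesic of length $n$ form a walk in $G\setminus\{j\}$ ending in $N(j)$, since geodesics are paths and so do not revisit $j$. Consequently
\[
b(z) = -M\, z^{-n} + O(z^{-n-1}), \qquad M := W_{n-1}(1, N(j)) \geq 1,
\]
and $b(z)^2 = M^2 z^{-2n} + O(z^{-2n-1})$.

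To finish, observe that $a(z) \to 0$ at $\infty$, so $(a(z) - \lambda)(a(z) - \mu) \to \lambda\mu \neq 0$ for any $\lambda, \mu \in \R \setminus \{0\}$ with $\lambda \neq \mu$. Substituting gives
\[
\Lambda_\lambda(z) - \Lambda_\mu(z) = \frac{(\lambda - \mu)\, M^2}{\lambda \mu}\, z^{-2n} + O(z^{-2n-1}),
\]
a vanishing of order exactly $2n$, which is the claimed contact order. The main obstacle I anticipate is the combinatorial identification of the minimal walk length, but it reduces cleanly to the fact that geodesics are paths, so the proof should come together with little friction once the Schur complement and the Neumann expansion are in place. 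The finitely many exceptional $\lambda$-values ($\lambda = 0$ or $\lambda = \mu$) are exactly those already excluded in the definition of $\Lambda_\lambda$ recalled from \cite{bps2}, and the case $j = 1$ (so $n = 0$) is degenerate and falls outside the hypothesis.
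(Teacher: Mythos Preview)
Your argument is correct and runs parallel to, but not identical with, the paper's. Both proofs isolate the $w$-dependence via a Schur complement and then read off the leading order from a Neumann/walk-generating expansion. The paper, however, peels off \emph{both} the root vertex $1$ and the $w$-vertex $j$ simultaneously, reducing to the $2\times 2$ system $\bbm -z - g & -p \\ -p & -w - h\ebm\inv_{(1,1)}$ in which the off-diagonal entry $p$ collects walk generating functions in $G\setminus\{1,j\}$; this forces a case split on whether $1$ and $j$ are adjacent, since the upper-left $2\times 2$ block then carries a $1$ rather than a $0$ in the $(1,2)$ slot. You instead remove only vertex $j$ and work with the scalar Schur complement $-w - v^T B(z)\inv v$, which produces the closed form $\Lambda_\la - \Lambda_\mu = (\la - \mu)\,b^2/[(a-\la)(a-\mu)]$ and handles $n=1$ and $n\ge 2$ uniformly. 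Your combinatorial step (identifying the minimal walk length from $1$ to $N(j)$ in $G\setminus\{j\}$ via a geodesic) is exactly the content of the paper's Lemma~\ref{pathorder}, applied in the slightly larger subgraph $G\setminus\{j\}$ rather than $G\setminus\{1,j\}$; the nonnegativity of walk counts prevents any cancellation in the sum defining $b$, so the leading coefficient $M$ is genuinely positive. The one omission is the case $j=1$ (so $n=0$), which the conjecture does not exclude and which the paper handles separately (its Case~3) to obtain contact order $0$; your dismissal of it as ``outside the hypothesis'' is not quite justified, though the missing computation is immediate.
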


The following example illustrates the conjecture and also sheds light on the proof strategy.

\begin{example}
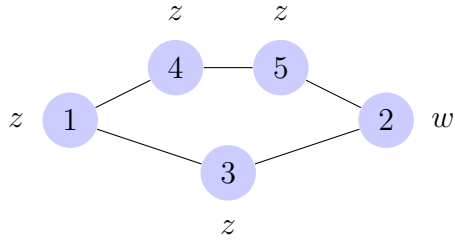
\begin{figure}[!htbp]
    \centering
    \begin{tikzpicture} [scale=.7,auto=center, every node/.style={circle,fill=blue!20}]

        \node[label=left: {$z$}] (1) at (6, 1) {$1$};
    \node[label=below:{$z$}] (3) at (9, 0) {$3$};
    \node[label=right:{$w$}] (2) at (12, 1) {$2$};
    \node[label=above:{$z$}] (4) at (8, 2) {$4$};
    \node[label=above:{$z$}] (5) at (10,2) {$5$};

    \draw (1) -- (3);
    \draw (1) -- (4);
    \draw (2) -- (3);
    \draw (4) -- (5);
    \draw (5) -- (2);
    
    \end{tikzpicture}
    \caption{Example of contact order calculation.}
    \label{fig:contact}
\end{figure}
Consider the colored graph in Figure \ref{fig:contact}, in which we will compute the representing function with respect to vertex 1. 
\[
\A_G = \begin{bNiceArray}{cc|c:cc} -z & 0 & 1 & 1 & 0 \\ 0 & -w & 1 & 0 & 1  \\\hline 1 & 1 & -z & 0 & 0  \\\hdottedline 1 & 0 & 0 & -z & 1 \\ 0 & 1 & 0 & 1 & -z
\end{bNiceArray}
\]
Notice that $2,2$ block is itself diagonal in this case (as each outbound edge from vertex 1 corresponds to precisely one inbound edge to vertex 2). Schur complements give
\begin{align*}
&(\A_G\inv)_{(1,1)} \\
&=\left( \bbm -z & 0 \\ 0 & -w \ebm - \underbrace{\bbm 1\\1 \ebm (-z)\inv \bbm 1 & 1 \ebm}_{\text{path 1 data}} - \underbrace{\bbm 1 & 0 \\ 0 & 1 \ebm \bbm -z & 1 \\ 1 & -z \ebm\inv \bbm 1 & 0 \\ 0 & 1 \ebm}_{\text{path 2 data}}\right)\inv_{(1,1)} \\
&= \left( \bbm -z & 0 \\ 0 & -w \ebm - \bbm -\frac{1}{z} & -\frac{1}{z} \\ -\frac{1}{z} & -\frac{1}{z} \ebm - \bbm -\frac{z}{z^2-1} & -\frac{1}{z^2-1} \\ -\frac{1}{z^2-1} & -\frac{z}{z^2-1}\ebm \right)\inv_{(1,1)} \\
&= \bbm -z + \frac{1}{z} + \frac{z}{z^2-1} & \frac{1}{z} + \frac{1}{z^2 - 1} \\   \frac{1}{z} + \frac{1}{z^2 - 1} & -w + \frac{1}{z} + \frac{z}{z^2 - 1} \ebm\inv_{(1,1)} \\
&= \frac{w z^3-w z-2 z^2+1}{-w z^4+3 w z^2-w+2 z^3-4 z+2}.
\end{align*}
We examine the structure of level sets near the singularity at $(\infty,0)$. First, solving
\[
\frac{w z^3-w z-2 z^2+1}{-w z^4+3 w z^2-w+2 z^3-4 z+2} = \la
\]
for $w$ to get $w = \Lambda_\la(z)$ and expanding as a series in $z$ at $\infty$ gives
\[
w = \frac{2}{z} + \frac{2}{z^3} +\frac{2}{z^4} - \frac{1}{\la z^4} + O\left(\left(\frac{1}{z}\right)^5\right).
\]
That is, the order of vanishing for $\Lambda_\la(z) - \Lambda_\mu(z)$ at $\infty$ is $4$, which is twice the length of the shortest path from vertex $1$ to vertex $3$.
\end{example}

Notice that in the example above, we have
\[ 
(\A_G\inv)_{(1,1)} = \bbm -z + g(z) & f(z) \\   f(z) & -w + g(z) \ebm\inv_{(1,1)}
\]
where $f(z)$ and $g(z)$ represent the ``path data'' of the graph. These are related to what are often called walk generating functions. Recall that if $A$ is the adjacency matrix of a graph, the entry $a^{(n)}_{ij}$ of $A^n$ counts walks of length $n$ from vertex $i$ to vertex $j$. The walk generating function is the matrix valued function
\[
W(z) = [W_{ij}(z)]_{ij} = (A - z I)\inv
\]
which will be defined off of the spectrum of $A$. Notice that for $i = j$, we recover a one-variable Nevanlinna transform $W_{ii}(z) = \langle(A- zI)\inv e_i, e_i \rangle$. While the $i\neq j$ entries of $W(z)$ need not be Pick functions, each entry will be analytic for $z$ sufficiently large. 

\begin{lemma} \label{pathfunctionlemma}
    Let $G$ be a z-colored graph with adjacency matrix A. Then the expansion at $\infty$ of the walk generating function from vertex $i$ to vertex $j$ is $-\sum_{n = 0}^{\infty} ($\# of paths from i to j of length n$)z^{-(n+1)}$.
\end{lemma}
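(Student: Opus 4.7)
The plan is to compute the Neumann (geometric) series expansion of the walk generating function at $\infty$ and then read off the coefficients using the standard combinatorial interpretation of the powers of $A$ already recalled in the paragraph preceding the lemma.

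Since $G$ is $z$-colored, the colored adjacency matrix is simply $\cA_G = A - zI$, so the walk generating function coincides with $(A-zI)^{-1}$. I would first factor $-z$ out of the resolvent to write
\[
(A - zI)^{-1} = -\frac{1}{z}\left(I - \frac{A}{z}\right)^{-1}.
\]
For $|z|$ strictly larger than the spectral radius (equivalently, than $\|A\|$), the Neumann series for $(I - A/z)^{-1}$ converges absolutely in operator norm, yielding
\[
(A - zI)^{-1} = -\sum_{n=0}^{\infty} A^n z^{-(n+1)},
\]
valid in a deleted neighborhood of $\infty$.

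Next, I would take the $(i,j)$ entry of both sides. Since matrix entries are continuous linear functionals, they commute with the norm-convergent sum, giving
\[
W_{ij}(z) = -\sum_{n=0}^{\infty} (A^n)_{ij}\, z^{-(n+1)}.
\]
By the standard identification already invoked in the lead-in to the lemma, $(A^n)_{ij}$ equals the number of walks of length $n$ from vertex $i$ to vertex $j$, which finishes the proof.

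There is no real obstacle here; the only technical point is justifying the use of the geometric series, which is immediate from $\|A/z\| < 1$ for $|z|$ sufficiently large. It may also be worth a one-line remark that the word ``paths'' in the statement is used in the walk sense of $(A^n)_{ij}$ (allowing repeated vertices and edges), consistent with the discussion preceding the lemma.
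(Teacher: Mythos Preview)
Your proof is correct and follows essentially the same route as the paper's: factor out $-1/z$, expand $(I - A/z)^{-1}$ as a Neumann series for large $|z|$, and read off the $(i,j)$ entry using the walk-counting interpretation of $(A^n)_{ij}$. Your added remarks on the convergence region and on the ``paths'' versus ``walks'' terminology are fine clarifications but do not change the argument.
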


\begin{proof}
    The walk generating function from vertex $i$ to vertex $j$ is given by $W_{ij}(z) = (A-zI)^{-1}_{(i,j)}$. But we can write, for sufficiently large $z$,
    \begin{align*}
        W_{ij}(z) = (A-zI)^{-1}_{(i,j)} &= -\dfrac{1}{z} \left(I-\dfrac{A}{z}\right)^{-1}_{(i,j)} \\
        &= -\dfrac{1}{z} \left(\sum_{n=0}^{\infty} \dfrac{A^n}{z^n}\right)_{(i,j)} \\
        &= -\sum_{n=0}^{\infty} \dfrac{(A^n)_{(i,j)}}{z^{n+1}}.
    \end{align*}
    Since the $i,j$ entry of $A^n$ gives the number of paths of length $n$ from $i$ to $j$ we have that $$W_{ij}(z) = -\sum_{n = 0}^{\infty} (\text{\# of paths from i to j of length n})z^{-(n+1)}.$$
\end{proof}

In particular, we observe that the functions $W_{ij}(z) = \langle (A - zI)\inv e_i, e_j \rangle$ admit power series expansions at $\infty$. We also note that the previous lemma can be proved with an induction on graphs, but the method is quite a bit more tedious. 

\begin{lemma} \label{pathorder}
    In a graph $G$, if the shortest path from vertex $i$ to $j$ is of length $n$ then the order of vanishing of the path function at $\infty$ from $i$ to $j$ will be $n+1$.
\end{lemma}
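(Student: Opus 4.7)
The plan is to apply Lemma~\ref{pathfunctionlemma} to convert the claim into a purely combinatorial statement. That lemma gives the expansion at infinity
\[
W_{ij}(z) = -\sum_{k=0}^{\infty} \frac{(A^k)_{ij}}{z^{k+1}},
\]
so the order of vanishing of $W_{ij}$ at $\infty$ is $m+1$, where $m$ is the smallest non-negative integer with $(A^m)_{ij} \neq 0$. Since $(A^m)_{ij}$ counts walks from $i$ to $j$ of length $m$, the problem reduces to showing that the length of the shortest walk from $i$ to $j$ coincides with the shortest path length $n$.

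The key step is then this equivalence. Since every simple path is itself a walk, the minimum walk length is at most $n$. For the other direction, I would take any walk of length $k < n$ from $i$ to $j$ and iteratively excise the closed sub-walk between any two occurrences of a repeated vertex; this process terminates in a simple path from $i$ to $j$ of length at most $k < n$, contradicting the minimality of $n$. Hence the smallest $m$ for which $(A^m)_{ij} \neq 0$ is exactly $n$, with $(A^n)_{ij} \geq 1$ witnessed by any shortest path. Substituting back into the series, the leading nonzero term is $-(A^n)_{ij}\, z^{-(n+1)}$, and the order of vanishing of $W_{ij}$ at $\infty$ is $n + 1$.

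There is no significant technical obstacle; the result is a direct corollary of Lemma~\ref{pathfunctionlemma} combined with the elementary fact that the graph distance between two vertices equals the length of a shortest walk between them. The only subtlety to emphasize in the write-up is the distinction between walks (counted by powers of the adjacency matrix) and simple paths (which realize the distance), which is exactly what the excision argument above bridges.
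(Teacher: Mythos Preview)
Your proposal is correct and follows essentially the same approach as the paper: both invoke Lemma~\ref{pathfunctionlemma} and read off the first nonzero term of the series at $\infty$. The paper's proof is terser and conflates ``paths'' with ``walks'' throughout, whereas you explicitly separate the two notions and supply the excision argument showing that shortest walk length equals graph distance; this extra care is sound but not a genuinely different route.
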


\begin{proof}
    Since we have an infinite series for the path function at $\infty$ we look at the first term for the order of vanishing. The first term is the first $n$ for which there is a path of length $n$ between vertices $i$ and $j$ which will be $z^{-(n+1)}$. Then the order of vanishing will be $n+1$ when the shortest path is length $n$.
\end{proof}

We are now ready to resolve Conjecture \ref{conjecture1}.

\begin{theorem}\label{contactorder}
Let $G$ be a colored graph with one vertex, say $j$, labelled $w$ and all others $z$. Then the order of contact of the Nevanlinna representation at vertex $i$ is $2n$ where the minimum path length from $i$ to $j$ is $n$.
\end{theorem}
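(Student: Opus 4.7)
The plan is to treat the $w$-coloring as a rank-one perturbation of the uncolored resolvent $(A-zI)^{-1}$, reducing the calculation to three walk generating functions whose orders at infinity are governed by Lemma \ref{pathorder}. By Lemma \ref{relabel} I may take $i=1$. Since only vertex $j$ carries the $-w$ loop while every other vertex carries a $-z$ loop, the colored adjacency matrix splits as $\A_G = (A-zI) + (z-w)\, e_j e_j^T$, where $A$ denotes the ordinary (uncolored) adjacency matrix of $G$.

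Applying the Sherman--Morrison formula with base $B = A - zI$ and rank-one perturbation $(z-w)\, e_j e_j^T$, the $(i,i)$-entry of $\A_G\inv$ collapses to
\[
f(z,w) \;=\; W_{ii}(z) \;-\; \frac{(z-w)\, W_{ij}(z)^2}{1 + (z-w)\, W_{jj}(z)},
\]
where $W_{pq}(z) = (A-zI)\inv_{(p,q)}$ is the walk generating function of Section 5. Solving $f(z,w) = \lambda$ for $w$ is then a scalar manipulation and yields
\[
\Lambda_\lambda(z) \;=\; z \;-\; \frac{W_{ii}(z) - \lambda}{W_{ij}(z)^2 - (W_{ii}(z) - \lambda)\, W_{jj}(z)}.
\]

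A common-denominator calculation, in which the explicit $z$ cancels and the numerators $W_{ii}(z)-\lambda$ and $W_{ii}(z)-\mu$ combine to give the constant $\lambda - \mu$, produces
\[
\Lambda_\lambda(z) - \Lambda_\mu(z) \;=\; \frac{(\lambda - \mu)\, W_{ij}(z)^2}{\bigl[W_{ij}(z)^2 - (W_{ii}(z)-\lambda) W_{jj}(z)\bigr]\bigl[W_{ij}(z)^2 - (W_{ii}(z)-\mu) W_{jj}(z)\bigr]}.
\]

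By Lemmas \ref{pathfunctionlemma} and \ref{pathorder}, $W_{ii}(z)$ and $W_{jj}(z)$ each vanish to order $1$ at infinity (the trivial walk of length $0$), while $W_{ij}(z)$ vanishes to order $n+1$, since the shortest walk from $i$ to $j$ coincides with the shortest path. For $n \geq 1$ and generic $\lambda,\mu \neq 0$, the term $W_{ij}(z)^2$ (order $2n+2 \geq 4$) is negligible against $(W_{ii}(z)-\lambda) W_{jj}(z)$ (order $1$) inside each bracket, so the denominator vanishes to order exactly $2$ and the numerator to order $2n+2$. Consequently $\Lambda_\lambda - \Lambda_\mu$ vanishes to order $2n$, which is the claimed contact order. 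The conceptually important step is the Sherman--Morrison reduction, which collapses an $n \times n$ inversion into a scalar expression in just three walk generating functions; the rest is a clean leading-order count. The only mild subtlety is verifying that the denominator really has order exactly $2$ (not higher), which holds because $W_{jj}(z) = -1/z + O(1/z^2)$ has nonzero leading coefficient independent of $G$.
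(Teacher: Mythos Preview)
Your approach is correct and takes a genuinely different route from the paper's. The paper blocks $\A_G$ so that vertices $i$ and $j$ sit together in a $2\times 2$ corner and then takes a Schur complement against the remaining $z$-colored subgraph; this forces a three-way case split ($n\geq 2$, $n=1$, $n=0$) because the corner block changes shape according to whether $i$ and $j$ are adjacent or equal, and in each case the functions $g,h,p$ that appear are walk generating functions in the \emph{deleted} graph $A_{22}$. Your Sherman--Morrison reduction instead treats the lone $w$-loop as a rank-one perturbation of the full $z$-colored resolvent, so the three walk generating functions $W_{ii},W_{jj},W_{ij}$ live on the whole graph and no case split is needed. The payoff is the closed form
\[
\Lambda_\lambda(z)-\Lambda_\mu(z)=\frac{(\lambda-\mu)\,W_{ij}(z)^2}{D_\lambda(z)\,D_\mu(z)},\qquad D_\lambda=W_{ij}^2-(W_{ii}-\lambda)W_{jj},
\]
from which the contact order $2n$ is read off in one line. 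This is strictly more economical than the paper's argument.

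One small omission: your final paragraph restricts the order count to $n\geq 1$, but the theorem (and the paper's Case~3) also covers $n=0$, i.e.\ $i=j$. Your formula handles this too: when $i=j$ one has $W_{ij}=W_{ii}=W_{jj}$, so $D_\lambda=\lambda\,W_{ii}$ has order exactly $1$ for $\lambda\neq 0$, the numerator $(\lambda-\mu)W_{ii}^2$ has order $2$, and the quotient has order $0=2n$. You should state this explicitly.
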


\begin{proof}
The argument goes by cases.

\textbf{Case 1:} First, consider the case where $n \geq 2$ (that is, the vertices of interest aren't directly connected). By Lemma \ref{relabel}, without loss of generality we consider the case where we calculate the Nevanlinna representation from vertex 1 and the $w$ is located at vertex 2. Then the matrix $\A_G$ can be written
\[
\A_G = \bbm \bbm -z & 0 \\ 0 & -w \ebm & A_{12} \\ A_{21} & A_{22} \ebm,
\]
where $A_{22}$ represents the graph formed from all paths from vertices adjacent to vertex $1$ to vertices adjacent to vertex $2$, and $A_{12} = A_{21}^T$ encodes the connection of the ``middle'' graph to the terminating vertices $1$ and $2$.
Taking a Schur complement with respect to the $(1,1)$ block gets
\[
f_G(z,w) = (\A_G\inv)_{(1,1)} = \bbm -z -g(z) & -p(z) \\ -p(z) & -w - h(z) \ebm\inv_{(1,1)}
\]
where $g$ is the sum of walk generating functions that represent walks leaving and returning to vertex 1 through each adjacent vertex, $h$ is the sum of walk generating functions representing walks leaving and returning to vertex $2$ through each vertex adjacent to vertex $2$, and $p$ is the sum of walk generating functions representing the paths connecting vertex $1$ to vertex $2$ through each pair of edges adjacent to vertex $1$ and vertex $2$ respectively. We have reduced the problem to finding the $(1,1)$ entry of the inverse of a $2 \times 2$ scalar matrix. Hence,
\begin{align*}
f_G(z,w) &= \bbm -z -g(z) & -p(z) \\ -p(z) & -w - h(z) \ebm\inv_{(1,1)} \\
&= \frac{-h-w}{-p^2+g h+g w+h z+w z}.
\end{align*}
To calculate the order of contact, we find the structure of the level curves in a neighborhood of $(\infty, 0)$ by setting $$f_G(z,w) = \la.$$ Solving for $w$ gives the level curve parametrization
\[
w = \Lambda_\la(z) = \frac{p^2 \la-g h \la-h \la z-h}{g \la+\la z+1} = -h + p^2 \cdot \frac{\la}{1 + \la(g + z)}.
\]

So the analysis comes down to understanding the function
\[
\frac{\la}{1 + \la(g(z) + z)}.
\]
As $g(z)$ is a sum of walk generating functions in $z$, $g$ has a power series expansion at $\infty$, and so expanding the rational expression as a series in $z$ at $\infty$, we get
\[
\frac{\la}{1 + \la(g(z) + z)}  = \frac{1}{z}  - \frac{a_0}{z^2} - \frac{1}{\la z^2}  + O\left(\left(\frac{1}{z}\right)^3\right).
\]
That is, the first term that depends on $\la$ is order 2. Note that $p$, another sum of walk generating functions, when expanded as a power series at $\infty$, has  lowest order term of order $n-1$ where $n$ is the shortest path length between $i$ and $j$ by Lemma \ref{pathorder}. So $p^2$ has lowest order term $2n - 2$. Thus 
\[
\Lambda_\la(z) = -h + p^2 \cdot \frac{\la}{1 + \la(g + z)} = -h + \frac{p^2}{z}  - \frac{a_0 p^2 }{z^2} - \frac{p^2}{\la z^2}  + O\left(\left(\frac{1}{z}\right)^3\right),
\]
which implies that generically $\Lambda_\la(z) - \Lambda_\mu(z)$ has order of vanishing $2n$ and hence $f_G$ has order of contact $2n$ at $(\infty, 0)$. 

\textbf{Case 2:} Now consider the case where the distance from vertex 1 to vertex 2 is 1 - that is, the $z$-vertex and the $w$-vertex of the Nevanlinna representation are connected. In this case, 
\[
\A_G = \bbm \bbm -z & 1 \\ 1 & -w \ebm & A_{12} \\ A_{21} & A_{22} \ebm.
\]
Then
\begin{align*}
f_G(z,w) = (\A_G)\inv_{(1,1)} &= \bbm -z -g(z) & 1-p(z) \\ 1-p(z) & -w - h(z) \ebm\inv_{(1,1)} \\
&= \frac{-h-w}{-p^2+2 p+g h+g w+h z+w z-1}.
\end{align*}
Choosing a value $\la$ and solving for $w$ as a function of $z$ for the level curve function gives
\[
w = \Lambda_\la(z) = -h + \frac{(p-1)^2 t}{g t+t z+1}
\]
Performing a similar expansion at $\infty$ gives that $\frac{1}{\la z^2}$ is the first term depending on $\la$, and so the order of contact is $2$.

\textbf{Case 3:} Finally, consider the case where we compute a Nevanlinna representation with respect to a vertex colored $w$, enumerated as vertex 1. Then we can write
    
\[
    \A_G = \begin{bmatrix}
        w & A_{12} \\
        A_{12}^T & A_{22}
    \end{bmatrix}.
\]
Now compute the Nevalinna representation to get

    \begin{align*}
        f_G(z,w) = (\A_G\inv)_{(1,1)} &= (-w - A_{12}A_{22}^{-1}A_{12}^T)^{-1} \\
        &= \dfrac{1}{-w-p(z)}
    \end{align*}

   where $p(z)$ is the sum of walk generating functions leaving and returning to vertex $1$ through adjacent vertices. We now compute $\Lambda_\la(z)$ as above, solving $f_G(z,w) = \la$ for $w$, which gives
   \[
   \Lambda_\la(z) = -p(z) - \frac{1}{\la}.
   \]

 By Lemma \ref{pathorder}, $p(z)$ has lowest order term at least 1, and so the constant term $-\dfrac{1}{\la}$ cannot be canceled out by $p(z)$. Then the order of contact is $0$.

\end{proof}

Note that this observation gives a method for constructing families of rational Pick functions with prescribed boundary behavior at a guaranteed singularity. It remains open if we can extend this idea to design a function with more complicated prescribed boundary behavior. We should note that generally this construction creates two or more singularities in the resulting function - one remaining question is to understand additionally arising boundary singularities in terms of the graph structure.

\section{A combinatorial observation}
The following observation of potentially independent interest arose in the development of the function theory of the previous section. Following Bickel and Hong's terminology \cite{hong}, a \dfn{stick graph} (more commonly called  a \textit{path graph} in graph theory)  consists of a line of vertices with adjacent vertices connected by edges. Let $S_n$ be the $z$-colored adjacency matrix of the n-stick, a stick with $n$ vertices. For example:
    \begin{align*}
        S_1 &= \begin{bmatrix}
            -z
        \end{bmatrix} \\
        S_2 &= \begin{bmatrix}
            -z & 1 \\ 1 & -z
        \end{bmatrix} \\
        S_3 &= \begin{bmatrix}
            -z & 1 & 0 \\ 1 & -z & 1 \\ 0 & 1 & -z
        \end{bmatrix}
    \end{align*}
    and so on.
    Let $T_n = \det(S_n)$. The following observation may be of independent interest. (See also \cite[Section 3.1]{rahm}.)

    \begin{theorem}
        For stick graphs the determinant $T_n$, of stick $S_n$, is the coefficient of $x^n$ in the power series of 
        
        \[f(x) = \frac{1}{1+x^2+xz}.\]
    \end{theorem}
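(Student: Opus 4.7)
My plan is to identify the determinants $T_n$ and the power series coefficients of $f(x)=\frac{1}{1+x^2+xz}$ as two sequences that satisfy the same second-order linear recurrence with the same initial data, and then conclude equality by induction on $n$.

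First, I would derive the recurrence satisfied by $T_n$ directly from the tridiagonal structure of $S_n$. Expanding $\det(S_n)$ along the last row, the only two nonzero entries are the $-z$ on the diagonal and the $1$ immediately to its left. The cofactor of $-z$ is $T_{n-1}$, and the cofactor of the off-diagonal $1$ is a block matrix whose determinant, after a second expansion along its last column, reduces to $T_{n-2}$. This yields
\[
T_n = -z\,T_{n-1} - T_{n-2} \qquad (n \geq 2),
\]
with the base cases $T_0 = 1$ (by the convention $\det(\emptyset)=1$) and $T_1 = -z$ read directly from the matrices.

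Next, writing $f(x) = \sum_{n \geq 0} a_n x^n$ and clearing denominators in the identity $(1 + xz + x^2)f(x) = 1$, I would compare coefficients of $x^n$ on both sides. The constant term gives $a_0 = 1$, the $x^1$ coefficient gives $a_1 + z a_0 = 0$ so $a_1 = -z$, and for $n \geq 2$ the coefficient of $x^n$ gives
\[
a_n + z\,a_{n-1} + a_{n-2} = 0,
\]
i.e.\ $a_n = -z\,a_{n-1} - a_{n-2}$. Since $T_n$ and $a_n$ obey the same second-order linear recurrence and agree at $n=0$ and $n=1$, a straightforward induction shows $T_n = a_n$ for every $n\geq 0$, which is the desired identity.

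There is no real obstacle here; the only mild care needed is the convention $T_0=1$ and checking that the cofactor expansion genuinely gives the stated recurrence rather than something with a sign error (the off-diagonal $1$'s contribute $-(1)(1)T_{n-2}$ after the two nested expansions, matching the $-T_{n-2}$ term). Once the recurrence matches the power-series recurrence, the rest is bookkeeping.
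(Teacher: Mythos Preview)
Your proposal is correct and follows essentially the same approach as the paper: both arguments derive the recurrence $T_n = -zT_{n-1} - T_{n-2}$ from the tridiagonal structure of $S_n$ and then match it against the generating function $\frac{1}{1+xz+x^2}$ with the initial data $T_0=1$, $T_1=-z$. The only cosmetic difference is direction: the paper sums the recurrence to solve for the closed-form generating function, whereas you expand the closed form and verify its coefficients satisfy the recurrence; these are inverse to one another and equally valid.
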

    \begin{proof}
        Note that, \begin{align*}
            T_n &= \det(S_n) = \det \left(\left[\begin{array}{c|c}
    -z & \begin{matrix} 1 & 0 & \cdots & 0 \end{matrix} \\
    \hline \begin{matrix}
         1 \\ 0 \\ \vdots \\ 0
     \end{matrix}& S_{n-1}
    \end{array}\right]\right) \\
            &= \det \left(\left[\begin{array}{c|c}
    \begin{matrix} -z & 1 \\ -z & 1 \end{matrix} &
    \begin{matrix}0 & \cdots & & 0\\ 1 & 0 & \cdots & 0\end{matrix}\\ \hline
     \begin{matrix} 0&1 \\ \vdots&0 \\ & \vdots\\ 0&0\end{matrix} & S_{n-2}
    \end{array}\right]\right)\\
            &= -z \cdot \det(S_{n-1}) - 1 \cdot \det \left(\left[\begin{array}{c|c}
                1 & \begin{matrix} 0 & \cdots & 0 \end{matrix} \\ \hline
                \begin{matrix} 1 \\ 0 \\ \vdots \\ 0  \end{matrix} & S_{n-2}
            \end{array}\right]\right)\\
            &= -z \cdot \det(S_{n-1}) - \det(S_{n-2}) \\
            &= -z \cdot T_{n-1} - T_{n-2}
        \end{align*}

        With this recurrence relation and the fact that $T_1 = -z$, $T_2 = z^2 - 1$, we can solve this for a generating function. Noting that $T_0 = 1$ fits the recurrence relation, we solve for the ordinary generating function of $T_n$, $f(x) = \sum\limits_{n=0}^\infty T_n x^n$.

        \begin{align*}
            T_n &= -z \cdot T_{n-1} - T_{n-2}, \qquad  n\ge 2\\
            \sum\limits_{n=2}^\infty T_n x^n &= -z \cdot \sum\limits_{n=2}^\infty T_{n-1}x^n - \sum\limits_{n=2}^\infty T_{n-2}x^n \\
            \sum\limits_{n=2}^\infty T_n x^n &= -zx \sum\limits_{n=1}^\infty T_n x^n - x^2 \sum\limits_{n=0}^\infty T_n x^n \\
            f(x) - xT_1 - T_0 &= -zx(f(x) - T_0) - x^2f(x) \\
            f(x)(1 + zx + x^2) &= -zx + 1 + zx \\
            f(x) &= \frac{1}{1 + zx + x^2}
        \end{align*}
    \end{proof}

The polynomial coefficients of the series representation of $\frac{1}{1 + zx + x^2}$ have numerical coefficients that are certain diagonals in Pascal's triangle and arise in certain so-called Riordan arrays (see \cite[A109466]{oeis} and  \cite{barry}). 

\section{Open Questions}

Certain obvious extensions remain. In addition to the set of open questions in the final section of \cite{bickelhong}, we have computational evidence for the following conjecture.
\begin{conjecture}
    Given a graph $G$, colored with $z$ and $w$, the contact order of the representation computed from vertex $i$ is twice the path distance to the closest vertex colored $w$.
\end{conjecture}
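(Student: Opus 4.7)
The approach generalizes the proof of Theorem~\ref{contactorder} by a second Schur complement that acts on the full $|W|$-dimensional block of $w$-colored vertices rather than a single entry. After relabeling so that the root is vertex $1$ (if the root is itself colored $w$, Case~3 of the proof of Theorem~\ref{contactorder} applies verbatim and gives contact order $0$), Schur-complementing out the interior $z$-vertices $Z' = Z \setminus \{1\}$ produces the reduced block matrix
\begin{equation*}
M(z,w) = \begin{pmatrix} -z - g(z) & \mathbf{r}(z)^T \\ \mathbf{r}(z) & -wI + C_W - H(z) \end{pmatrix},
\end{equation*}
where $g$ collects walks from the root through $Z'$ back to itself, $\mathbf{r}(z)$ is a column vector whose $k$-th entry combines the direct-edge and through-$Z'$ walk contributions from the root to the $k$-th $w$-vertex, $C_W$ is the induced adjacency on $W$, and $H(z)$ records through-$Z'$ walks between $w$-vertices. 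By Lemma~\ref{pathfunctionlemma} the coordinate $r_k(z)$ has leading order $1/z^{n_k - 1}$, with $n_k$ the distance from the root to $w$-vertex $k$.

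A block inverse of $M$ then gives $f_G^{-1} = -z - g - \mathbf{r}^T(E - wI)^{-1}\mathbf{r}$ with $E(z) := C_W - H(z)$. Setting $f_G = \lambda$ and expanding via the spectral resolution of the symmetric matrix $E$ yields
\begin{equation*}
\sum_j \frac{(\mathbf{r}(z) \cdot v_j(z))^2}{\mu_j(z) - w} = -z - g(z) - \frac{1}{\lambda},
\end{equation*}
where $\{\mu_j(z), v_j(z)\}$ is the eigensystem of $E(z)$. Since the right-hand side tends to $-\infty$ as $z \to \infty$, the branch of level curves with $\Lambda_\lambda(z) \to 0$ must approach an eigenvalue $\mu_*(z)$ of $E$ that itself tends to $0$; such a branch exists exactly when $C_W$ admits $0$ as an eigenvalue, which holds in particular whenever the $w$-vertices are pairwise non-adjacent so $C_W = 0$ (the implicit hypothesis for the singularity at $(\infty,0)$ in all the paper's examples). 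Isolating the dominant pole gives
\begin{equation*}
w - \mu_*(z) = \frac{(\mathbf{r}(z) \cdot v_*(z))^2}{z + g(z) + 1/\lambda} + (\text{contributions from } \mu_j \neq \mu_*).
\end{equation*}

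The key graph-theoretic step is to show that $(\mathbf{r}(z) \cdot v_*(z))^2 = C/z^{2n - 2} + O(1/z^{2n-1})$, where $n = \min_k n_k$ and $C>0$ generically. The leading coefficient $\mathbf{r}_0$ of $z^{n-1}\mathbf{r}(z)$ is supported precisely on the $w$-vertices at minimum distance $n$ from the root (by the order estimate for $r_k$), so the claim reduces to the perturbation-theoretic statement that the eigenvector $v_*(z)$ selected by the level-curve branch satisfies $v_*(\infty) \cdot \mathbf{r}_0 \neq 0$. Combined with the expansion $1/(z + g + 1/\lambda) = 1/z - (g + 1/\lambda)/z^2 + O(1/z^3)$, whose first $\lambda$-dependent term occurs at order $1/z^2$, one obtains $w - \mu_*(z) = O(1/z^{2n-1}) - (\mathbf{r}_0 \cdot v_*(\infty))^2/(\lambda z^{2n}) + O(1/z^{2n+1})$, and since $\mu_*$ is independent of $\lambda$, the difference $\Lambda_\lambda - \Lambda_\mu$ vanishes to exact order $2n$.

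The main obstacle is the alignment $v_*(\infty) \cdot \mathbf{r}_0 \neq 0$. In the single-$w$-vertex situation of Theorem~\ref{contactorder} it is automatic because $E$ is $1 \times 1$; with multiple $w$-vertices, when $C_W$ has a zero-eigenspace of dimension greater than $1$ (as always happens when $C_W = 0$) one must use degenerate perturbation theory on the expansion $E(z) = C_W - H_1/z + O(1/z^2)$, arguing that the $1/z$-order perturbation generically lifts the degeneracy and selects an eigenvector compatible with $\mathbf{r}_0$. Degenerate cases (root directly adjacent to a $w$-vertex, or $C_W$ admitting no zero eigenvalue so that there is no singularity at $(\infty,0)$) should then be dispatched by minor variants as in Cases~2 and~3 of Theorem~\ref{contactorder}.
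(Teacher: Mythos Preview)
This statement appears in the paper's Open Questions section as an unproven conjecture; there is no proof in the paper to compare against. The authors explicitly identify the obstruction: with multiple $w$-vertices the function has $w$-degree greater than one, so the level sets near the singularity split into several branches rather than a single analytic curve $w = \Lambda_\lambda(z)$, and one must control the contact order of each branch separately.

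Your block Schur complement reduction to a $(1+|W|)$-dimensional problem is the natural extension of the argument for Theorem~\ref{contactorder}, and the spectral resolution of $E(z)$ is a reasonable device for separating the branches. But the sketch is not a proof, and the gap you yourself flag --- the alignment condition $v_*(\infty)\cdot\mathbf{r}_0 \neq 0$ --- is the heart of the difficulty rather than a technicality to be cleaned up. Under your working hypothesis $C_W = 0$, the limit $E(\infty)=0$ is \emph{fully} degenerate, so first-order perturbation theory diagonalizes the next term $H_1$, the matrix of length-two walk counts between $w$-vertices through $Z'$. Nothing structural prevents an eigenvector of $H_1$ from being orthogonal to $\mathbf{r}_0$: if, say, two nearest $w$-vertices are placed symmetrically with respect to the root, $H_1$ will have an antisymmetric eigenvector on which $\mathbf{r}_0$ vanishes, and your order count for that branch collapses. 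You also track only the single branch attached to $\mu_*$ and absorb the remaining eigenvalues into an unanalyzed error term, whereas the conjecture (and the paper's commentary) requires all $|W|$ branches to have contact order $2n$; the non-selected branches need their own argument, including the question of whether they even pass through the same boundary singularity. Finally, the restriction to $C_W=0$ is stronger than anything the conjecture assumes --- when the $w$-vertices span a subgraph whose adjacency matrix has no zero eigenvalue, the singularity is not at $(\infty,0)$ and the expansion you set up does not apply.
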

    
This generalizes Theorem \ref{contactorder} to include graphs where there is not just one but multiple $w$ vertices, which results in rational inner Pick functions of general degree $(n,m)$. The difficulty is in the local analytic parametrization of the level curves near a singularity, which will have branches rather than a single function \cite{bps2}. However, evidence suggests that every branch has the exact same order of contact. 

We also have some preliminary work on what happens if vertices are colored with convex combinations of $z$ and $w$. Bickel and Hong consider this case. While the star and comb product formulas carry through in this setting, we have not been able to use our methods to explore questions about singular behavior.

More generally, there seem to be connections between this graph-theoretic interpretation of the Nevanlinna representation and new work by one of the authors and J. E. Pascoe on the representation of formal languages by noncommutative series \cite{combconv} that would be worth exploring.

\section*{Acknowledgments}

The authors would like to thank Jeff Liese for a key observation connecting walk generating functions to the Nevanlinna representation. The authors also thank Kelly Bickel for inspiring this project, as well as comments and conversations about the connections between her work and ours. Additionally, this work would not be possible without extensive conversations about the connections between graph combinatorics and realizations with J. E. Pascoe. Finally, we wish to acknowledge the support of the Cal Poly Frost Summer Research Program.

\bibliography{references}

\begin{thebibliography}{10}

\bibitem{aty12}
J.~Agler, R.~Tully-Doyle, and N.J. Young.
\newblock Nevanlinna representations in several variables.
\newblock {\em J. Funct. Anal.}, 270, 2016.

\bibitem{agax}
O.~Arizmendi and T.~Gaxiola.
\newblock Asymptotic spectral distributions of distance-k graphs of star
  product graphs.
\newblock In Rams{\'e}s~H. Mena, Juan~Carlos Pardo, V{\'i}ctor Rivero, and
  Ger{\'o}nimo Uribe~Bravo, editors, {\em XI Symposium on Probability and
  Stochastic Processes}, pages 47--58, Cham, 2015. Springer International
  Publishing.

\bibitem{ahl22}
O.~Arizmendi, T.~Hasebe, and F.~Lehner.
\newblock Cyclic independence: {B}oolean and monotone.
\newblock arxiv 2204.00072, 2022.

\bibitem{barry}
P.~Barry.
\newblock Embedding structures associated with {R}iordan arrays and moment
  matrices.
\newblock {\em Int. J. Comb.}, 2014.

\bibitem{bickelhong}
K.~Bickel and Y.~Hong.
\newblock Stable polynomials via undirected graphs.
\newblock preprint. arXiv:2506.12624, 2025.

\bibitem{bps1}
K.~Bickel, J.~E. Pascoe, and A.~Sola.
\newblock Derivatives of rational inner functions: geometry of singularities
  and integrability at the boundary.
\newblock {\em Proc. London Math. Soc.}, 116:281--329, 2018.

\bibitem{bps2}
K.~Bickel, J.~E. Pascoe, and A.~Sola.
\newblock Level curve portraits of rational inner functions.
\newblock {\em Ann. Sc. Norm. Super. Pisa Cl. Sci.}, pages 449--494, 2020.

\bibitem{carter}
M.~Carter, S.~Tornier, and G.~Willis.
\newblock On free products of graphs.
\newblock {\em Australas. J. Combin.}, 78(1), 2020.

\bibitem{gutkin}
E.~Gutkin.
\newblock Green's functions of free products of operators, with applications to
  graph spectra and to random walks.
\newblock {\em Nagoya Math. J.}, 149, 1998.

\bibitem{hong}
Y.~Hong.
\newblock Graphs, adjacency matrices, and corresponding functions.
\newblock Bachelor's thesis, Bucknell University, 2023.

\bibitem{muraki}
N.~Muraki.
\newblock Monotonic convolution and monotonic {L\'evy-Hincin} formula.
\newblock preprint, 2000.

\bibitem{oeis}
{OEIS Foundation Inc.}
\newblock The {O}n-{L}ine {E}ncyclopedia of {I}nteger {S}equences, 2005.
\newblock Published electronically at \url{http://oeis.org}.

\bibitem{pascoe1}
J.~E. Pascoe.
\newblock An inductive julia-carath{\'e}odory theorem for pick functions in two
  variables.
\newblock {\em Proc. Edin. Math. Soc.}, 61:647 -- 660, 2016.

\bibitem{combconv}
J.~E. Pascoe and R.~Tully-Doyle.
\newblock Matrix convex verbatim enumeration functions are graphical.
\newblock preprint. arXiv:2401.06932, 2025.

\bibitem{Popa2008ANP}
M.~Popa.
\newblock A new proof for the multiplicative property of the boolean cumulants
  with applications to operator-valued case.
\newblock {\em Colloq. Math.}, 117, 2008.

\bibitem{quenell}
G.~Quenell.
\newblock Combinatorics of free product graphs.
\newblock {\em Contemp. Math.}, 173, 1994.

\bibitem{rahm}
L.~Rahm.
\newblock Generating functions and regular languages of walks with modular
  restrictions in graphs.
\newblock Bachelor's thesis, Linköping University, 2017.

\bibitem{Speicher1993BooleanC}
R.~Speicher and R.~Woroudi.
\newblock Boolean convolution.
\newblock In {\em Free probability theory}, 1993.

\end{thebibliography}
\bibliographystyle{plain}

\end{document}